\def\R{\ensuremath{\mathbb R}}
\def\C{\ensuremath{\mathbb C}}
\def\Z{\ensuremath{\mathbb Z}}
\def\rk{\operatorname{rk}}
\def\ind{\operatorname{ind}}
\def\rk{\operatorname{rk}}
\def\Ad{\operatorname{Ad}}
\def\ch{\operatorname{ch}}
\def\frg{{\mathfrak g}}
\def\frh{{\mathfrak h}}
\def\frp{{\mathfrak p}}
\def\frs{{\mathfrak s}}
\def\frt{{\mathfrak t}}
\def\frz{{\mathfrak z}}
\def\norm#1{\left\|#1\right\|}
\def\<{\langle}
\def\>{\rangle}
\def\End{{\operatorname{End}}}
\newcommand\vol{\operatorname{vol}}
\newcommand\im{\operatorname{im}}
\def\SO{\mathrm{SO}}
\def\Spin{\mathrm{Spin}}
\def\spin{\mathfrak{spin}}
\def\Adach{{\hat A}}
\def\Ad{\operatorname{Ad}}
\def\ad{\operatorname{ad}}
\def\ads{{\,\widetilde{\!\vphantom{x}\smash{\mathrm{ad}}\!}\,}}
\def\adsh{{\,\widehat{\widetilde{\!\vphantom{x}\smash{\mathrm{ad}}\!}}\,}}
\def\adx_#1{\ads_{\frp,#1}}
\def\ady_#1{\adsh{}_{\frp,#1}}
\def\punkt{{\mathord{\,\cdot\,}}}
\def\even{{\mathrm{ev}}}
\def\odd{{\mathrm{odd}}}
\def\^#1{{\,}^{#1\!}}
\numberwithin{equation}{section}
\theoremstyle{plain}
\newtheorem{Lemma}[equation]{Lemma}
\newtheorem{Proposition}[equation]{Proposition}
\newtheorem{Corollary}[equation]{Corollary}
\newtheorem{thm}{Theorem}
\newtheorem{cor}[thm]{Corollary}
\theoremstyle{definition}
\newtheorem{Definition}[equation]{Definition}
\newtheorem*{Dfn}{Definition}
\theoremstyle{remark}
\newtheorem{rem}{Remark}
\newtheorem{Remark}[equation]{Remark}
\newtheorem{Example}[equation]{Example}
\begin{document}
\title{Scalar Curvature Estimates\\by Parallel Alternating Torsion}
\author{Sebastian Goette}
\address{Mathematisches Institut\\
Universit\"at Freiburg\\
Eckerstr.~1\\
79104 Freiburg\\
Germany}
\email{sebastian.goette@math.uni-freiburg.de}
\keywords{Scalar Curvature; Skew Torsion; Parallel Torsion; Homogeneous Spaces}
\thanks{Supported in part by DFG special programme
``Global Differential Geometry''}
\subjclass[2000]{53C21 (Primary) 58J20, 53C15, 53C30 (Secondary)}
\begin{abstract}
We generalize Llarull's scalar curvature comparison to Riemannian
manifolds admitting metric connections with parallel and alternating torsion
and having a nonnegative curvature operator on~$\Lambda^2TM$.
As a byproduct,
we show that Euler number and signature of such manifolds
are determined by their global holonomy representation.
Our result holds in particular for all quotients of compact Lie groups of
equal rank, equipped with a normal homogeneous metric.

We also correct a mistake in the treatment of odd-dimensional spaces
in~\cite{Gvawi} and~\cite{GS}.
\end{abstract}

\maketitle
There is a well known relation between the existence of
metrics of positive scalar curvature on a closed manifold~$M$
and the topology of~$M$.
If there exist metrics with positive scalar curvature~$\kappa$,
one would like to measure how large~$\kappa$ can become.
This could be done using the Yamabe number or $\sigma$-invariant~$\sigma(M)$,
which is defined by taking the infimum of the integral of~$\kappa$
for all metrics in a conformal class,
and then the supremum over all conformal classes.
As announced by Ammann, Dahl and Humbert in~\cite{ADH},
there exists a constant~$c_m>0$ for~$m=\dim M$
such that~$\min(\sigma(M),c_m)$ is a spin bordism invariant
over~$B\pi_1(M)$.
In this note however,
we will consider the pointwise scalar curvature instead.

Let~$g$ and~$\bar g$ be two Riemannian metrics on~$M$,
and let~$\kappa$ and~$\bar\kappa$ their scalar curvature.
Using the $K$-area inequalities, Gromov showed 
in~\cite{gromov} that there is a finite upper bound for~$\min\bar\kappa$
if~$\bar g\ge g$ on~$\Lambda^2TM$.
A first example for a sharp upper bound was given
by Llarull in \cite{Llarull}.
If~$g$ denotes the round metric on~$S^n$ and~$\bar g\ge g$ on~$\Lambda^2TM$,
then~$\bar\kappa(p)\le\kappa(p)$ for some~$p\in M$.
In fact, $\bar\kappa\ge\kappa$ on~$M$ implies~$\bar\kappa=\kappa$
and~$\bar g=g$.
In other words,
the round metric on the sphere is strongly area-extremal
in the following sense.

\begin{Dfn}
  A metric~$g$ on~$M$ is called {\em area-extremal\/}
  if for all metrics~$\bar g$ on~$M$ with~$\bar g\ge g$ on~$\Lambda^2TM$,
  the inequality~$\bar\kappa\ge\kappa$ everywhere on~$M$
  implies that~$\bar\kappa=\kappa$.
  We call~$g$ {\em strongly area-extremal\/} if~$\bar g\ge g$ on~$\Lambda^2TM$
  and~$\bar\kappa\ge\kappa$ on~$M$ also imply that~$\bar g=g$.
\end{Dfn}

In \cite{gromov}, Gromov asked which manifolds possess area-extremal 
metrics and how such metrics may look like. He conjectured that Riemannian 
symmetric spaces should have area-extremal metrics.
He also proposed to investigate not only variations of the metric
on~$M$ itself,
but to consider also area-non-increasing spin maps of non-vanishing
$\Adach$-degree from other Riemannian manifolds to~$M$,
see Section~\ref{MapSection} for an explanation of these terms.

\begin{Dfn}
  A metric~$g$ on~$M$ is called {\em area-extremal in the sense of Gromov\/}
  if for all smooth spin maps~$f\colon(N,\bar g)\to(M,g)$
  of nonzero $\hat A$-degree with~$\bar g\ge f^*g$ on~$\Lambda^2TN$,
  ine inequality~$\bar\kappa\ge\kappa\circ f$ everywhere on~$N$
  implies that~$\bar\kappa=\kappa\circ f$.
  We call~$g$ {\em strongly area-extremal in the sense of Gromov\/}
  if~$\bar g\ge f^*g$ on~$\Lambda^2TN$
  and~$\bar\kappa\ge\kappa\circ f$ on~$N$ also imply that~$f$
  is a Riemannian submersion.
\end{Dfn}

One might think of stronger versions of area-extremality.
However, already if~$M$ is a point, one has to assume
that~$f$ is spin and has nonzero degree in one sense or another.
Llarull proved in~\cite{Llarull2} that spheres are also area-extremal
in the sense of Gromov.
Since then,
several other manifolds have been shown to be area-extremal,
some of them even in the sense of Gromov,
see~\cite{GS} for an overview of related results.

So far, the only method to prove area-extremality
combines the nonvanishing of an analytic index
with a careful investigation of the curvature term in
the Bochner-Lichnerowicz-Weitzenb\"ock formula
for a twisted Dirac operator~$\bar D$ with respect to the metric~$\bar g$.
The relevant indices will be discussed in Section~\ref{IndexSect}.
As observed in~\cite{GS2}, \cite{GS},
the curvature conditions needed for the scalar curvature estimates
are satisfied in the following cases.
\begin{enumerate}
\item $(M,g)$ is a Fano manifold,
  i.e., it is K\"ahler and has nonnegative Ricci curvature \cite{GS2}.
\item The curvature operator on~$\Lambda^2TM$ is nonnegative, see~\cite{GS}.
  Then~$M$ is homeomorphic to a Riemannian symmetric space~$G/H$.
\end{enumerate}
In the following situation,
one expects the estimate to hold as well.
\begin{enumerate}
\item[(3)] $M$ is quaternionic K\"ahler with positive scalar curvature.
\end{enumerate}
However,
it is conjectured that quaternionic K\"ahler manifolds
with positive scalar curvature are symmetric with~$\rk G=\rk H$.
If this is true, then~(3) is just a special case of~(1).
The estimate of~\cite{GS} has recently been strengthened as follows.

\begin{thm}[Listing, \cite{Listing}]
  Let~$(M,g)$ be a compact, connected, oriented Riemannian manifold
  with non-negative curvature operator 
  on~$\Lambda^2(TM)$.
  Assume that there exists a parallel Dirac subbundle~$W$
  of~$\Lambda^\bullet T^*M$
  such that the Hodge-Dirac operator has non-vanishing index on~$W$.
  Let~$\bar g$ be another Riemannian metric on~$M$
  with scalar curvature~$\bar\kappa$,
  and let~$f$ be a positive function on~$M$
  such that~$f\,\bar g\ge g$ on~$\Lambda^2TM$.
  Then~$\bar\kappa\ge f\,\kappa$ implies~$\bar\kappa=f\,\kappa$.
  If moreover, the Ricci curvature of~$g$ satisfies~$\rho>0$
  and~$2\rho-\kappa<0$,
  then~$\bar\kappa\ge f\,\kappa$ implies~$f\,\bar g=g$
  and~$f$ is constant.
\end{thm}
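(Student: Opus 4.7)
The plan is to adapt the twisted Dirac operator approach of Lichnerowicz and Llarull that underlies the theorem of~\cite{GS}. Since $W\subset\Lambda^\bullet T^*M$ is parallel for the Levi-Civita connection $\nabla$ of~$g$, the bundle $W$ together with its Clifford action is a metric-independent topological object; I would couple it to the $\bar g$-spinor bundle (or work directly in the $\Spin^c$ setting if $M$ is not spin) to form a twisted Dirac operator $\bar D_W$. The index of $\bar D_W$ is a topological invariant equal to that of the $g$-Hodge--Dirac operator restricted to~$W$, so the hypothesis produces a nonzero $\psi\in\ker\bar D_W$.

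On this twisted bundle the Bochner--Lichnerowicz--Weitzenb\"ock formula reads
\[
  \bar D_W^{\,2} \;=\; \bar\nabla^*\bar\nabla + \tfrac{\bar\kappa}{4}\id + \bar{\mathcal R}^W,
\]
where $\bar{\mathcal R}^W = \tfrac14\sum \bar e_i\cdot\bar e_j\cdot R^W(\bar e_i,\bar e_j)$ uses a $\bar g$-orthonormal frame together with the $g$-induced curvature of~$W$. The heart of the argument, which I expect to be the main technical obstacle, is the pointwise inequality
\[
  \bar{\mathcal R}^W \;\geq\; -\tfrac{f\kappa}{4}\,\id.
\]
Both curvature hypotheses on~$g$ enter here. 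Writing the curvature of $W$ as the image of $g$'s curvature operator $\mathfrak R\colon\Lambda^2TM\to\Lambda^2TM$ under the representation $\so(TM)\to\End(W)$, nonnegativity of $\mathfrak R$ together with $f\bar g\ge g$ on~$\Lambda^2TM$ lets one dominate the $\bar g$-Clifford trace by a $g$-Clifford trace inflated by the conformal factor~$f$. After the representation-theoretic manipulation already used in~\cite{GS} for~$f\equiv 1$, this produces exactly~$f\kappa$. Tracking equality cases throughout is crucial for the rigidity statements.

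Combining the estimate with $\bar D_W\psi = 0$ and integrating gives
\[
  0 \;=\; \|\bar\nabla\psi\|_{L^2}^{2} \;+\; \tfrac14\int_M (\bar\kappa - f\kappa)\,|\psi|^{2}\,d\bar\vol.
\]
Under $\bar\kappa\ge f\kappa$ both nonnegative summands vanish, so $\psi$ is $\bar\nabla$-parallel, hence nowhere zero, and $\bar\kappa = f\kappa$ everywhere, which is the first assertion.

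For the rigidity conclusion, parallelism of~$\psi$ forces equality pointwise in the curvature estimate, so $\bar{\mathcal R}^W\psi = -\tfrac{f\kappa}{4}\psi$ everywhere. The equality analysis pins down the 2-planes on which $f\bar g$ must coincide with~$g$; under $\rho>0$ and $2\rho-\kappa<0$ these 2-planes are numerous enough at every point to force $f\bar g = g$ as a symmetric $2$-tensor on~$TM$, not just on~$\Lambda^2$. Constancy of~$f$ then follows by comparing the Levi-Civita connections of the conformally related metrics $\bar g$ and $f^{-1}g$: their difference is an explicit expression in $d\log f$, and the existence of a nontrivial $\bar\nabla$-parallel section in a bundle that is also canonically $\nabla$-parallel once the metrics are proportional reduces to $df\equiv 0$.
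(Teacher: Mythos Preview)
The paper does not contain a proof of this theorem: it is stated in the introduction and attributed to Listing~\cite{Listing}, so there is no ``paper's own proof'' to compare against. The paper's own contribution is Theorem~\ref{MainThm}, whose proof (Section~\ref{ProofSect}) specialized to~$T=0$ recovers the earlier result of~\cite{GS} but not Listing's strengthening with the factor~$f$.

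That said, your outline is the expected one and parallels the paper's argument for Theorem~\ref{MainThm}: twist the $\bar g$-Dirac operator by~$W$, apply the Bochner--Lichnerowicz--Weitzenb\"ock formula, use nonnegativity of the curvature operator to write the twist curvature term as a manifestly nonnegative square (cf.\ equation~\eqref{RootFormel}) plus a scalar remainder, and integrate against a harmonic section. Two points where your sketch is loose. First, the displayed identity after integration should be an inequality, not an equality: the slack in~$\bar{\mathcal R}^W\ge -\tfrac{f\kappa}{4}$ contributes a further nonnegative summand, and it is precisely the vanishing of that summand that drives the equality analysis for rigidity. Second, your argument for constancy of~$f$ is not convincing as written. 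Once~$f\bar g=g$ and~$\bar\kappa=f\kappa$, the metrics~$\bar g$ and~$g$ are conformal and the relation~$\bar\kappa=f\kappa$ becomes a second-order PDE in~$f$ via the conformal scalar-curvature formula; deducing~$df=0$ from the existence of a $\bar\nabla$-parallel section of the twisted bundle requires more than ``comparing connections,'' since that section is parallel for the tensor-product connection~$\bar\nabla^{SM}\otimes\nabla^V$, not for~$\bar\nabla^{SM}$ alone. As the paper does not treat this step, I cannot tell you how Listing resolves it.
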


In the present paper, we replace the Levi-Civita connection
of the metric~$\bar g$
in the definition of the twisted Dirac operator~$\bar D$ on~$M$
(or~$N$ if we are considering maps~$f\colon N\to M$)
by a different connection~$\tilde\nabla$.
This is still a generalised Dirac operator in the sense that its
square is the sum of a generalised Laplacian and a zero order operator.
The generalised Laplacian here is induced by a connection~$\nabla'$ on~$TM$.
We assume that~$\nabla'$ has parallel and alternating torsion tensor~$T$,
as explained in Section~\ref{PartorSect}.
Unless~$T=0$,
this implies that the holonomy group of~$\nabla'$ is a proper subgroup
of~$SO(TM)$.

Because~$T$ is parallel,
the curvature~$R'$ of~$\nabla'$
acts as a symmetric endomorphism on~$\Lambda^2TM$
by Lemma~\ref{ParTorLemma}~\eqref{SymmetryRel}.
Hence, there exists a ``curvature operator'' as in the Riemannian case.
For our estimates, this operator must be nonnegative.

Let us also define the kernel of the torsion tensor~$T$ as
\begin{equation*}
  \ker T_p:=\bigl\{\,V\in T_pM\bigm|T(V,W)=0\text{ for all }W\in T_pM\,\bigr\}
\end{equation*}
for all~$p\in M$.
Note that~$\ker T_p$ defines a $\nabla'$-parallel subbundle of~$TM$.
In particular,
the universal covering of the triple~$(M,g,\nabla')$ always splits
as a product~$(M_0,g_0,\nabla'_0)\times(M_1,g_1,\nabla'_1)$
such that~$\nabla'_0$ is the Levi-Civita connection
and~$\nabla'_1$ has parallel and alternating torsion~$T_1$
with~$\ker T_1=0$.
This splitting will not be used in this paper.

\begin{thm}\label{MainThm}
  Let~$(M,g)$ be a closed connected Riemannian spin manifold
  with scalar curvature~$\kappa$.
  Assume that there exists a metric connection~$\nabla'$ on~$TM$
  with parallel and alternating torsion that induces
  a nonnegative curvature operator~$R'$ on~$\Lambda^2TM$.
  Assume also
  that there exists a $\nabla'$-parallel
  Dirac subbundle~$W\subset\Lambda^\bullet T^*M$
  such that the restriction of the Hodge Dirac to~$W$ has nonzero index.
  Then~$(M,g)$ is area-extremal.

  Moreover, if either
  \begin{enumerate}
  \item\label{NewCond} the Ricci tensor~$\rho$ of~$g$ is positive definite
    on~$\ker T$ and~$T\ne 0$, or
  \item\label{GSCond} we have~$\rho>0$ and~$2\rho-\kappa\,g<0$,
  \end{enumerate}
  then~$(M,g)$ is strongly area-extremal.
\end{thm}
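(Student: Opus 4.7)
The plan is to adapt the Lichnerowicz--Llarull strategy, replacing the Levi-Civita connection of $\bar g$ in the twisted Dirac operator by the modified connection $\tilde\nabla$. Since $\nabla'$ has parallel and alternating torsion, the parallel-torsion lemma makes the curvature $R'$ symmetric on $\Lambda^2TM$, so the hypothesis $R'\geq 0$ can be used as a genuine operator bound. Concretely, twist the $\bar g$-spinor bundle with the $\nabla'$-parallel Dirac bundle $W$ and define $\bar D$ via $\tilde\nabla$. A Bochner--Weitzenb\"ock computation then yields an identity of the shape
\begin{equation*}
  \bar D^2=(\nabla')^*\nabla'+\tfrac{\bar\kappa}{4}+\mathcal{K}^W,
\end{equation*}
where the zero-order twist term $\mathcal{K}^W$ is built from $R'$ acting on $W$ via Clifford multiplication.

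The crucial pointwise inequality to establish is $\mathcal{K}^W+\tfrac{\kappa}{4}\geq 0$. The idea is to use that $R'\geq 0$ on $\Lambda^2TM$ and that $\bar g\geq g$ on $\Lambda^2TM$ furnishes a positive comparison operator whose Frobenius-style pairing with $R'$ controls $\mathcal{K}^W$ from below. A Clifford-algebra calculation on $W$, in the spirit of Listing's proof, recovers $-\kappa/4$ as the sharp lower bound for the $g$-intrinsic twist contribution. Combining with the scalar term in the Weitzenb\"ock identity gives
\begin{equation*}
  \bar D^2\geq(\nabla')^*\nabla'+\tfrac{1}{4}(\bar\kappa-\kappa).
\end{equation*}
The nonvanishing of the Hodge Dirac index on $W$ produces a nontrivial $\bar D$-harmonic section $\psi$; integrating the Weitzenb\"ock identity against $\psi$ and invoking $\bar\kappa\geq\kappa$ forces $\bar\kappa=\kappa$ on the support of $\psi$, and a unique-continuation argument then extends the equality to all of $M$, giving area-extremality.

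For the strong area-extremality assertions, one analyses the case of equality in the pointwise estimate more carefully. Equality pins down the comparison operator between $g$ and $\bar g$ on each $2$-plane that is detected by $R'$ or by the torsion. Under hypothesis~(\ref{GSCond}), Listing's rigidity argument carries over essentially unchanged, since parallel torsion does not disturb the spectral input he uses. Under the new hypothesis~(\ref{NewCond}), decompose $TM=\ker T\oplus(\ker T)^\perp$: on $\ker T$ the positivity of $\rho$ yields the required rigidity through the usual Lichnerowicz-type term, while on $(\ker T)^\perp$ the nonvanishing of the parallel tensor $T$ contributes additional curvature terms to $\mathcal{K}^W$ that rule out $\bar g>g$ there.

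The principal obstacle is the Clifford-algebraic identification of $\mathcal{K}^W$ in the presence of nontrivial alternating torsion: the naive Lichnerowicz expansion picks up $T$-dependent correction terms, and one must show that they absorb cleanly into the nonnegativity of $R'$ rather than producing indefinite remainders. A secondary difficulty is the rigidity argument under hypothesis~(\ref{NewCond}), where the interplay between $\ker T$ and $(\ker T)^\perp$ under the $g$-vs-$\bar g$ comparison operator has to be controlled using the parallelism of $T$.
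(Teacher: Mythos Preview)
Your plan is correct and follows essentially the same route as the paper: the modified twisted Dirac operator, the Weitzenb\"ock identity yielding~$\tilde{\bar D}^2\ge\frac14(\bar\kappa-\kappa)$ from~$R'\ge0$ and~$\lambda_i\lambda_j\le1$, the index hypothesis plus B\"ar's unique continuation for area-extremality, and the decomposition~$\ker T\oplus(\ker T)^\perp$ for rigidity under~(\ref{NewCond}). What the paper supplies beyond your outline is precisely the Clifford bookkeeping you flag as the principal obstacle: the torsion corrections split off as the manifestly nonnegative scalars~$\frac18\sum(1-\lambda_i^2\lambda_j^2)R'_{ijji}$ and~$\frac1{48}\sum(1-\lambda_i^2\lambda_j^2\lambda_k^2)\tau_{ijk}^2$, and in the rigidity step the key observation is that~$\tau_{pqr}\ne0$ forces~$\lambda_p=\lambda_q=\lambda_r=1$, which then gives~$\lambda_i\le1$ for \emph{all}~$i$ and allows the~$R'_{ijji}$ term to finish off the~$\ker T$ directions.
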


The index above will be explained in Section~\ref{IndexSect},
and the theorem will be proved in Section~\ref{ProofSect}.

As mentioned above,
our proof relies on a generalisation of the Bochner-Lichnerowicz-Weitzenb\"ock
formula for a certain modified twisted Dirac operator~$\tilde D$.
For connections with parallel and alternating torsion,
this formula is stated in Corollary~\ref{BLWCor}.
Let us state a simple consequence of this formula
that we prove in Section~\ref{BLWSection}.

\begin{cor}\label{HolonomyCor}
  Let~$(M,g)$ be a connected closed Riemannian manifold
  admitting a connection~$\nabla'$ with parallel and alternating torsion
  and nonnegative curvature operator on~$\Lambda^2TM$.
  Then~$\tilde D$ preserves the subspace of~$\Omega^\bullet(M)$
  consisting of $\nabla'$-parallel forms,
  and all $\tilde D$-harmonic forms are $\nabla'$-parallel.
  In particular,
  the Euler characteristic, the signature and the Kervaire semicharacteristic
  of~$M$---whenever they are defined---can be read off
  from the global holonomy representation of~$\nabla'$.
\end{cor}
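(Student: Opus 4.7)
The plan is to apply the Bochner-Lichnerowicz-Weitzenböck formula of Corollary~\ref{BLWCor} to~$\tilde D$ acting on~$\Omega^\bullet(M)\cong\Gamma(\Lambda^\bullet T^*M)$, producing a decomposition
\begin{equation*}
  \tilde D^2 \;=\; (\nabla')^*\nabla' + \mathcal{K},
\end{equation*}
where~$\mathcal{K}$ is a pointwise symmetric endomorphism of~$\Lambda^\bullet T^*M$ assembled from the curvature~$R'$ and the parallel torsion~$T$. Under our hypothesis that~$R'$ is nonnegative on~$\Lambda^2TM$, the Weitzenböck term induced on~$\Lambda^\bullet T^*M$ should be nonnegative by the classical argument underlying the Hodge Laplacian case, and the~$T$-dependent contributions behave well because~$T$ is~$\nabla'$-parallel. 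Putting these together to conclude~$\mathcal{K}\ge0$ on all of~$\Lambda^\bullet T^*M$ is where I expect the main technical work to sit, since it requires tracking the signs of the torsion corrections explicitly in the form of Corollary~\ref{BLWCor}.

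Granted~$\mathcal{K}\ge0$, the ``harmonic implies parallel'' assertion follows by integrating on the closed manifold~$M$: for~$\omega\in\ker\tilde D$,
\begin{equation*}
  0 \;=\; \langle\tilde D^2\omega,\omega\rangle_{L^2}
        \;=\; \|\nabla'\omega\|_{L^2}^2 + \langle\mathcal{K}\omega,\omega\rangle_{L^2},
\end{equation*}
and both summands must vanish. That~$\tilde D$ preserves the subspace of~$\nabla'$-parallel forms is then a purely algebraic statement: writing~$\tilde D = \sum_i c(e_i)\,\nabla'_{e_i} + \mathcal T$ with~$\mathcal T$ a zero-order operator built by Clifford contractions from the parallel tensor~$T$, the derivative part annihilates any~$\nabla'$-parallel~$\omega$, while~$\mathcal T$ preserves the parallel subspace because it is an endomorphism assembled entirely from~$\nabla'$-parallel data and therefore commutes with~$\nabla'$. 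Combining the two halves, the finite-dimensional space of~$\nabla'$-parallel forms contains~$\ker\tilde D$ and is itself stable under~$\tilde D$.

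For the topological consequence I would use that~$\tilde D$ shares its principal symbol with the Hodge-Dirac operator~$d+d^*$, so that on any~$\Z/2$-graded subbundle on which~$\tilde D$ acts as an odd operator the Fredholm index of~$\tilde D$ equals that of~$d+d^*$. Taking in turn the parity splitting of~$\Lambda^\bullet T^*M$, the Hodge~$*$-splitting, and the natural half-splitting in odd dimensions recovers the Euler characteristic, the signature, and the Kervaire semicharacteristic, once one checks case by case that~$\tilde D$ is odd with respect to the chosen splitting. By the preceding paragraph, the relevant harmonic spaces consist of~$\nabla'$-parallel forms, which at a base point~$p$ correspond bijectively to vectors in~$\Lambda^\bullet T_p^*M$ fixed by the global holonomy group of~$\nabla'$. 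Hence each of these invariants is read off from the global holonomy representation alone.
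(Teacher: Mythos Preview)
Your proposal is correct and follows the paper's proof essentially step for step. The nonnegativity of~$\mathcal K$ that you flag as the main technical point is in fact immediate from the explicit shape of Corollary~\ref{BLWCor}: the torsion contribution already appears there as the square~$\bigl(\tfrac1{12}\sum\tau_{ijk}\,\hat c_i\hat c_j\hat c_k\bigr)^2$ of a selfadjoint operator, while the curvature term is handled by taking a symmetric square root~$B$ of the nonnegative operator~$R'$ and recognising the sum as~$-\tfrac1{16}\sum_{i,j}\bigl(\sum_{k,l}B_{ijkl}(c_kc_l+\hat c_k\hat c_l)\bigr)^2$, i.e.\ minus a sum of squares of skew-adjoint endomorphisms, exactly the classical Gallot--Meyer device you allude to.
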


Under slightly stronger assumptions than in Theorem~\ref{MainThm},
we can even prove that~$(M,g)$ is area-extremal in the sense of Gromov.

\begin{thm}\label{MapThm}
  Let~$(M,g)$ be as in Theorem~\ref{MainThm}.
  If~$M$ is oriented and has positive Euler number,
  then~$(M,g)$ is area-extremal in the sense of Gromov.

  Moreover, if either
  \begin{enumerate}
  \item\label{NewCond2} the Ricci tensor~$\rho$ of~$g$ is positive definite
    on~$\ker T$ and~$T\ne 0$, or
  \item\label{GSCond2} we have~$\rho>0$ and~$2\rho-\kappa\,g<0$,
  \end{enumerate}
  then~$(M,g)$ is strongly area-extremal in the sense of Gromov.
\end{thm}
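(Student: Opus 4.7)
The plan is to transplant the proof of Theorem~\ref{MainThm} from $M$ to $N$. Given a spin map $f\colon(N,\bar g)\to(M,g)$ with nonzero $\Adach$-degree and $\bar g\ge f^*g$ on $\Lambda^2TN$, I consider on $N$ the twisted spinor bundle $S(N,\bar g)\otimes f^*W$, equipped with the connection built from the Levi-Civita spin connection of $\bar g$ together with $f^*\nabla'$ on the twist bundle, and then modified by a torsion correction pulled back from $M$. The fact that $T$, $W$ and the connection $\nabla'$ on $W$ are all $\nabla'$-parallel ensures that these objects pull back coherently to $N$, so the resulting modified Dirac operator $\tilde D$ is a genuine generalised Dirac operator whose square admits a Bochner-Lichnerowicz-Weitzenb\"ock decomposition of exactly the shape of Corollary~\ref{BLWCor}, but with $\bar\kappa$ in place of $\kappa$ and with the curvature endomorphism on the twist factor provided by $f^*R'$ acting on $f^*W$.

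The index step is where positive Euler number enters. By the Atiyah-Singer formula, the index of $\tilde D$ on $N$ is a characteristic number of the form $\langle\Adach(N)\cup f^*\ch(W),[N]\rangle$, and the definition of $\Adach$-degree in Section~\ref{MapSection} together with the hypothesis $\chi(M)>0$ guarantees that the index of the Hodge-Dirac on $W\to M$ (which by Corollary~\ref{HolonomyCor} reads off Euler characteristic, signature, etc.\ from holonomy) is nonzero and survives pullback: this is exactly the point where the combination ``$\Adach$-degree $\ne 0$'' and ``positive Euler number'' forces the $N$-side index to be nonzero. Hence $\tilde D$ has a nontrivial harmonic section $\psi$.

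Applying the pulled-back BLW formula to $\psi$ and integrating over $N$, one obtains an identity
\begin{equation*}
  0=\int_N\Bigl(\bigl|\tilde\nabla\psi\bigr|^2+\tfrac14\bar\kappa\,|\psi|^2-\mathcal R'(\psi,\psi)\Bigr),
\end{equation*}
where $\mathcal R'$ is the curvature term coming from $f^*R'$ on $f^*W$. The crucial linear-algebra estimate is that, because $R'\ge 0$ on $\Lambda^2TM$ and $\bar g\ge f^*g$ on $\Lambda^2TN$, one has $\mathcal R'(\psi,\psi)\le\tfrac14(\kappa\circ f)\,|\psi|^2$ pointwise on $N$, by the same Llarull-type argument (diagonalise the area-defect of $f$ in orthonormal bases and use nonnegativity of $R'$ eigenvalue-by-eigenvalue) that underlies Theorem~\ref{MainThm}. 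Combined with $\bar\kappa\ge\kappa\circ f$, the integrand must vanish identically, forcing $\bar\kappa=\kappa\circ f$ on the support of~$\psi$ and hence, by unique continuation for the elliptic operator $\tilde D$, everywhere on~$N$. For the strongly area-extremal statement, vanishing of $|\tilde\nabla\psi|^2$ produces a $\tilde\nabla$-parallel $\psi$, and then equality in the $\mathcal R'$-estimate under hypothesis \eqref{NewCond2} or \eqref{GSCond2}---exactly as in the $M$-side argument of Theorem~\ref{MainThm}---forces $\bar g=f^*g$ on $\Lambda^2TN$, which means $f$ is a Riemannian submersion.

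The main obstacle is the first paragraph: defining $\tilde D$ on $N$ so that its square matches Corollary~\ref{BLWCor} with $\bar\kappa$ and $f^*R'$ cleanly separated. The torsion tensor $T$ of $\nabla'$ lives on $M$ and only pulls back to a section of $\Lambda^2f^*T^*M\otimes f^*TM$ on $N$, so the ``parallel alternating torsion'' input to the BLW identity has to be carried along the map rather than used as a torsion of a connection on $TN$; getting the cross-terms to collapse correctly, and checking that the linear-algebra comparison of $R'$-actions under area-non-increase still produces the factor $\tfrac14\kappa\circ f$, is the technical crux on which everything else rests.
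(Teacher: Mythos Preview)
Your overall strategy---build a modified twisted Dirac operator on~$N$, show nonzero index, and extract the inequality from a Bochner--Lichnerowicz--Weitzenb\"ock formula---is the paper's. But there is a genuine gap in your choice of twist bundle, and your worry about torsion has a simpler resolution than you suggest.

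\textbf{The twist bundle.} You twist $S(N,\bar g)$ by $f^*W$, where $W\subset\Lambda^\bullet T^*M$ is the Dirac subbundle of Theorem~\ref{MainThm}. The paper twists by a \emph{single} copy of $f^*SM$, equipped with the pulled-back spin connection $f^*\nabla^{\prime SM}$ of~\eqref{NablaXSMDef}; the resulting Dirac bundle on~$N$ is the one locally isomorphic to $SN\otimes f^*SM$ that opens Section~\ref{MapSection}. This difference matters. With twist $f^*SM$, the curvature term in Lemma~\ref{BLWLemma} is exactly the $\tfrac18\sum\lambda_i\lambda_jR'_{ijkl}\,c_ic_j\hat c_k\hat c_l$ of Section~\ref{EstSect}, and formulas~\eqref{BLWFormel} and~\eqref{DquerBLWFormel} hold verbatim on~$N$, now with the~$\lambda_i$ the singular values of~$df_q$ as in~\eqref{EiquerDef}. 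With your twist $f^*W\cong f^*SM\otimes f^*\widehat{SM}$ there is a \emph{second} curvature contribution from the extra $f^*SM$ factor; running the rearrangement, Proposition~\ref{SquareProp}~\eqref{AijkComparison} gets applied once for each factor, producing~$-\tfrac\kappa4$ twice, while the torsion terms $\tfrac1{96}\sum(d\bar\tau)c^4$ and~$-\tfrac1{48}\sum\bar\tau^2$ from Lemma~\ref{BLWLemma} still appear only once. The cancellation that yields~\eqref{DquerBLWFormel} breaks: you are left with a surplus~$-\tfrac\kappa4$ and a sign-indefinite residue $-\tfrac1{96}\sum\lambda_i\lambda_j\lambda_k\lambda_l(d\tau)_{ijkl}\,c_ic_jc_kc_l$, so the estimate $\mathcal R'\le\tfrac14\kappa\circ f$ fails. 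The index step is also cleaner with the paper's twist: $\ch(S^+M-S^-M)=e(TM)$ on~$M$, giving $\ind=\deg_{\hat A}(f)\cdot\chi(M)$ directly.

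\textbf{The torsion.} Your last paragraph says the torsion ``has to be carried along the map rather than used as a torsion of a connection on~$TN$''. The paper does the opposite, and this is the whole trick: pull back the \emph{three-form} $\tau$ to~$\bar\tau=f^*\tau\in\Omega^3(N)$ and use this genuine form on~$N$ both to define the connection~$\bar\nabla'$ on~$TN$ via~\eqref{NablaxquerDef} and to add the cubic modification~$\tfrac1{24}\sum\bar\tau_{ijk}c_ic_jc_k$ to the Dirac operator. Lemma~\ref{BLWLemma} (Agricola--Friedrich) holds for \emph{any} three-form on~$N$, with no parallelism required; parallelism of~$T$ on~$M$ enters only through the algebraic identities of Proposition~\ref{SquareProp}, which are statements about the $M$-side quantities~$R'$, $\tau$, $d\tau$ with passive parameters~$\lambda_i$. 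Since $d\bar\tau=f^*d\tau$, every term transfers, and the ``cross-term collapse'' you flag as the crux is automatic.
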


This will be proved in Section~\ref{MapSection}.
In Section~\ref{HomoSect},
we apply Theorems~\ref{MainThm} and~\ref{MapThm}
to quotients of compact Lie groups.

\begin{cor}\label{HomoThm}
  Let~$M=G/H$ be a quotient of compact connected Lie groups,
  and let~$g$ be a normal homogeneous metric.
  If~$\rk G=\rk H$
  then~$(M,g)$ is area-extremal in the sense of Gromov,
  and strongly area-extremal in the sense of Gromov if moreover~$\dim M>2$.

  If~$\rk G-\rk H=1$, $\dim M=4k+1$ and~$W\subset\Lambda^\bullet T^*M$
  is a homogeneous Dirac subbundle such that~$D|_W$ has nonvanishing index,
  then~$(M,g)$ is area-extremal.
  If moreover~$(M,g)$ does not contain a Euclidean local de Rham factor
  and~$\dim M>2$,
  then~$(M,g)$ is strongly area-extremal.
\end{cor}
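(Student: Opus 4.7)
The plan is to derive Corollary~\ref{HomoThm} from Theorems~\ref{MainThm} and~\ref{MapThm} by verifying their hypotheses for the canonical reductive connection~$\nabla'$ on the normal homogeneous space~$(M,g)=(G/H,g)$. Fix an $\Ad_G$-invariant inner product on~$\frg$ inducing the metric, and let~$\frg=\frh\oplus\mathfrak m$ be the orthogonal reductive splitting. The canonical connection~$\nabla'$ on~$TM\cong G\times_H\mathfrak m$ is $G$-invariant with torsion~$T(X,Y)=-[X,Y]_{\mathfrak m}$ at the basepoint; since $G$-invariant tensors on a reductive homogeneous space are automatically $\nabla'$-parallel, $T$ is parallel. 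The bi-invariance of the reference inner product makes~$\<[X,Y]_{\mathfrak m},Z\>$ totally skew on~$\mathfrak m$, so~$T$ is alternating. The curvature~$R'(X,Y)=-\ad_{[X,Y]_\frh}|_{\mathfrak m}$, viewed as a symmetric operator on~$\Lambda^2\mathfrak m$, factors through the bracket~$\Lambda^2\mathfrak m\to\frh$ followed by its metric adjoint, hence is positive semidefinite. All curvature hypotheses of Theorem~\ref{MainThm} are thereby met.

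For the equal-rank case~$\rk G=\rk H$, the Hopf--Samelson formula gives~$\chi(M)=|W_G|/|W_H|>0$. Taking~$W=\Lambda^\bullet T^*M$, which is tautologically $\nabla'$-parallel as a Dirac subbundle of itself, the Hodge-Dirac operator on~$W$ has index~$\chi(M)\ne 0$, so Theorem~\ref{MainThm} yields area-extremality and Theorem~\ref{MapThm} upgrades this to the Gromov sense. For the strong statements under~$\dim M>2$, I would split according to whether~$T$ vanishes. If~$T\equiv 0$, then~$M$ is locally symmetric of equal rank; each irreducible de Rham factor is Einstein with positive scalar curvature and dimension at least~two, so a short product argument delivers condition~\eqref{GSCond2}. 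If~$T\not\equiv 0$, the well-known positivity of the Ricci tensor on normal homogeneous equal-rank spaces gives~$\rho>0$ and in particular~$\rho|_{\ker T}>0$, yielding condition~\eqref{NewCond2}.

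For the odd case~$\rk G-\rk H=1$ with~$\dim M=4k+1$, the assumed homogeneous Dirac subbundle~$W$ is $G$-invariant and hence $\nabla'$-parallel, so Theorem~\ref{MainThm} gives area-extremality directly. For the strong statement, I would exploit the splitting of the universal cover~$(M_0,g_0)\times(M_1,g_1,\nabla'_1)$ recalled just before Theorem~\ref{MainThm}, where~$M_0$ is locally symmetric (i.e.~$T_0\equiv 0$) and~$\ker T_1=0$. The hypothesis of no Euclidean local de Rham factor forces~$M_0$ to have strictly positive Ricci curvature, and since~$\ker T$ globally corresponds to~$TM_0$, one obtains~$\rho|_{\ker T}>0$. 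Combined with~$T\ne 0$---which must be verified from the rank difference~$\rk G-\rk H=1$, the absence of a flat factor, and~$\dim M>2$---this verifies condition~\eqref{NewCond} of Theorem~\ref{MainThm}.

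The main obstacle is precisely the last structural step in the odd-dimensional strong case: cleanly identifying~$\ker T$ with~$TM_0$ in the $\nabla'$-splitting, excluding~$T\equiv 0$ under the rank constraint, and propagating ``no Euclidean de Rham factor'' to strict positivity of the Ricci tensor on~$M_0$. The algebraic computations of torsion and curvature for normal homogeneous spaces are routine, but they must be combined carefully with the splitting theorem and, in odd dimensions, with the corrected treatment of Dirac subbundles mentioned in the abstract.
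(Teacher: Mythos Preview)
Your setup of the reductive connection---parallel alternating torsion and nonnegative curvature operator---is correct and matches the paper's Lemma~\ref{RedLemma}. The equal-rank case is also handled as in the paper: the Weyl-group formula gives $\chi(M)>0$, and Theorem~\ref{MapThm} applies.

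The gap is in the odd-dimensional strong case. Your strategy is to verify condition~\eqref{NewCond} of Theorem~\ref{MainThm}, which requires $T\ne 0$. But $T=0$ is \emph{not} excluded by the hypotheses $\rk G-\rk H=1$, no Euclidean factor, and $\dim M>2$: the symmetric space $\SU(3)/\SO(3)$ satisfies all three ($\rk\SU(3)=2$, $\rk\SO(3)=1$, $\dim M=5$, irreducible of compact type), yet $T\equiv 0$ since the reductive connection on a symmetric space is the Levi-Civita connection. Your proposed verification of $T\ne 0$ ``from the rank difference, the absence of a flat factor, and $\dim M>2$'' therefore cannot succeed, and condition~\eqref{NewCond} is unavailable for such spaces. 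You do supply a $T=0$ fallback in the equal-rank paragraph, but not here.

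The paper avoids this by establishing condition~\eqref{GSCond} uniformly via Lemma~\ref{RicciLemma}, which shows directly from the normal-homogeneous structure that the absence of a Euclidean local de Rham factor forces $\rho>0$ on all of $TM$, and that $2\rho-\kappa\,g<0$ unless the universal cover is $\R^m$ or $S^2\times\R^{m-2}$---both excluded by $\dim M>2$ together with the no-flat-factor hypothesis. This bypasses both the $T\ne 0$ question and your universal-cover splitting argument (which, incidentally, does not by itself yield $\rho|_{TM_0}>0$: you would need $M_0$ to carry enough structure, e.g.\ to be normal homogeneous, and that is not automatic from the abstract splitting). The same lemma also disposes of the equal-rank strong case in one stroke, since equal rank already rules out a Euclidean factor; your $T=0$/$T\ne 0$ dichotomy there is correct but unnecessary.
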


Note that the proof of the main results in~\cite{GS} and~\cite{Gvawi}
for symmetric spaces with~$\rk G-\rk H=1$ only work
with the same index-theoretic condition as in the Corollary above,
see Section~\ref{IndexSect} for an explanation.


Let us now relate the various assumptions on~$\nabla'$ in Theorem~\ref{MainThm}
to more geometric properties of~$(M,g)$.

\begin{rem}\label{ParTorRem}
  If~$T=0$,
  then~$\nabla'$ is the Levi-Civita connection,
  and we are in the situation of~\cite{GS}.
  Hence we assume that~$T\ne 0$ is $\nabla'$-parallel and alternating.
  In particular,
  it is fixed by the holonomy group~$H$ of~$\nabla'$.
  Let~$\pi$ denote the holonomy representation of~$H$. 
%
%
  If we assume that~$\pi$ is irreducible,
  then by a result of Cleyton and Swann~\cite{CS},
  there are three possibilities.
  \begin{enumerate}
  \item $(M,g)$ is locally isometric to a non-symmetric,
    isotropy irreducible homogeneous space~$G/H$;
  \item $(M,g)$ is locally isometric to one of the irreducible
    symmetric spaces~$H\times H/H$ or~$H^{\mathbb C}/H$;
  \item $(M,g)$ has weak holonomy~$SU(3)$ or~$G_2$.
  \end{enumerate}
  We are not aware of a general classification result in the case
  where~$H$ does not act irreducibly.
  If we assume that~$\dim M=2n$ and~$H\subset U(n)$,
  then~$T\ne 0$ is alternating and $\nabla'$-parallel
  iff~$M$ is nearly K\"ahler by a result of Kirichenko~\cite{Kiri};
  these manifolds have been further classified by Nagy~\cite{Nagy}
  and others.
%
\end{rem}

\begin{rem}\label{PosRem}
  If~$\nabla'$ has parallel and alternating torsion,
  we have a well-defined symmetric
  ``curvature operator''~$R'\in\End(\Lambda^2TM)$.
  We assume that~$R'$ is nonnegative.
  In the case where~$T=0$ and~$M$ is locally irreducible,
  one knows that~$M$ is either homeomorphic to a sphere,
  biholomorphic to a complex projective space, or isometric
  to a Riemannian symmetric space of compact type
  by results of Gallot and Meyer~\cite{gal}, Cao and Chow~\cite{CC}
  and Tachibana~\cite{T}.
  If~$T=0$ and the curvature operator is strictly positive,
  then~$M$ is actually diffeomorphic to a sphere
  by a recent result of B\"ohm and Wilking~\cite{BW}.

  We will see in section~\ref{HomoSect} that on all normal homogeneous spaces,
  the reductive connection has parallel and alternating torsion
  and its curvature operator is nonnegative.
  Most normal homogeneous metrics do not have nonnegative
  Riemannian curvature operator,
  so we really obtain some new examples of area-extremal metrics.
  But by Lemma~\ref{ParTorLemma}~\eqref{SectRel} below,
  nonnegativity of~$R'$ still implies
  that the Riemannian sectional curvatures of~$(M,g)$ are nonnegative.
  In particular,
  we are still far away from the goal stated in~\cite{GS}
  to generalize Llarull's theorem to Ricci positive manifolds.
%
\end{rem}

\begin{rem}\label{RigidityRem}
  Let us now regard our sufficient conditions for strong area-extremality
  in Theorems~\ref{MainThm} and~\ref{MapThm}.
  Obviously, condition~\eqref{NewCond} follows from~\eqref{GSCond} if~$T\ne0$.
  We will see that condition~\eqref{NewCond} also
  implies condition~\eqref{GSCond},
  which was used in~\cite{GS} in the case~$T=0$.

  Assume that~$U\notin\ker T$.
  Using Lemma~\ref{ParTorLemma}~\eqref{SectRel}, we see that
  \begin{equation*}
    \rho(U,U)=\sum_{i=1}^m\<R_{U,e_i}e_i,U\>
    =\sum_{i=1}^m\bigl(\<R'_{U,e_i}e_i,U\>+\|T(U,e_i)\|^2\bigr)
    >0
  \end{equation*}
  because~$\<R'_{U,e_i}e_i,U\>\ge 0$ and~$\|T(U,e_i)\|^2>0$
  for at least one~$i$.
  So~$(M,g)$ is indeed Ricci positive.

  Let~$U$ now be an arbitrary unit vector
  and extend~$U=e_1$ to a $g$-orthonormal frame.
  Then
  \begin{equation*}
    (\kappa\,g-2\rho)(U,U)
    =\sum_{i,j=2}^mR_{ijji}
    =\sum_{i,j=2}^m\biggl(R'_{ijji}+\frac14\sum_{k=1}^m\tau_{ijk}^2\biggr)>0
  \end{equation*}
  because~$R'_{ijji}\ge0$ by assumption,
  and at least one~$\tau_{ijk}$ does not vanish.
  Thus~$2\rho-\kappa$ is indeed negative definite.
\end{rem}

\subsection*{Acknowledgements}
The author wishes to thank Bernd Ammann, Mario Listing and Uwe Semmelmann
for helpful conversations and interest in this project.

\section{Metric Connections with Parallel and Alternating Torsion}
\label{PartorSect}
We collect some elementary properties of connections with parallel
alternating torsion.
Let~$\nabla=\nabla^{TM}$ denote the Levi-Civita connection on~$M$.

\begin{Definition}\label{ParTorDef}
  Let~$(M,g)$ be a Riemannian manifold,
  and let~$\nabla^{\prime TM}$ be a metric connection on~$TM$
  with torsion tensor~$T$.
  If~$\<T(\punkt,\punkt),\punkt\>$ is alternating,
  we say that~$\nabla'$ has {\em alternating torsion.\/}
  If~$\nabla'T=0$, we say that~$\nabla'$ has {\em parallel torsion.\/}
\end{Definition}

We will omit the superscript~$TM$ whenever no ambiguity can arise.
Let~$X$, $Y$, $Z$, $W$ be smooth vector fields on~$M$.
Let us define~$\tau\in\Omega^3(M)$ by
\begin{equation*}
  \tau(X,Y,Z)=\<T(X,Y),Z\>\;.
\end{equation*}
The curvature~$R'$ of~$\nabla'$ acts on~$\Lambda^2TM$ such that
\begin{equation*}
  \<R'(X\wedge Y),Z\wedge W\>=-\<R_{X,Y}Z,W\>\;.
\end{equation*}
For the Levi-Civita connection,
this gives the Riemannian curvature operator on~$\Lambda^2TM$.

\begin{Lemma}\label{ParTorLemma}
  Assume that~$\nabla'$ has parallel and alternating torsion.
  Then
  \begin{enumerate}
  \item\label{NablaAlphaRel}
    The form~$\nabla\tau=\frac14\,d\tau$ is fully alternating.
  \item\label{RiccatiRel}
    The form~$d\tau$ is given by
    \begin{equation*}
      (d\tau)(X,Y,Z,W)
      =2\bigl(\<T(X,Y),T(Z,W)\>+\<T(Y,Z),T(X,W)\>+\<T(Z,X),T(Y,W)\>\bigr)\;,
    \end{equation*}
  \item\label{CurvFormel}
    The curvature of~$R'$ is given by
    \begin{equation*}
      R'_{X,Y}Z
      =R_{X,Y}Z+(\nabla_XT)(Y,Z)
        +\frac14\,T\bigl(X,T(Y,Z)\bigr)-\frac14\,T\bigl(Y,T(X,Z)\bigr)\;.
    \end{equation*}
  \item\label{SymmetryRel}
    The action of~$R'$ on~$\Lambda^2TM$ is symmetric.
  \item\label{SectRel}
    We have the relation
    \begin{equation*}
      \<R'_{X,Y}Y,X\>
      =\<R_{X,Y}Y,X\>-\frac14\norm{T(X,Y)}^2\;.
    \end{equation*}
  \item\label{BianchiRel}
    The tensor~$S=R'-T(T(\punkt,\punkt),\punkt)$ has the same symmetries
    as the Riemannian curvature tensor.
  \end{enumerate}
\end{Lemma}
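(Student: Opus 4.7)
The plan is to build everything on the relation $\nabla' = \nabla + \frac12 T$, which holds for any metric connection with totally alternating torsion, and then to systematically exploit the parallelism $\nabla' T = 0$. Unpacking the latter in terms of $\nabla$ yields the pointwise identity
\begin{equation*}
(\nabla_X T)(Y,Z) = \tfrac12\,T(T(X,Y), Z) + \tfrac12\,T(Y, T(X,Z)) - \tfrac12\,T(X, T(Y,Z)).
\end{equation*}
From this I would first show that the $4$-tensor $F(X,Y,Z,W) := \<(\nabla_X T)(Y,Z), W\>$ is skew in $(X,Y)$: summing over $X \leftrightarrow Y$, the three terms pair off and cancel using only the skew-symmetry of $T$ in its two slots. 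Since $F$ is manifestly skew in $(Y,Z,W)$ because $\tau$ is a $3$-form, this gives full alternation of $\nabla\tau$, proving \eqref{NablaAlphaRel}. Part \eqref{RiccatiRel} then follows by plugging the displayed identity into $d\tau = 4\,\nabla\tau$ and converting each term of the form $\tau\bigl(\punkt, T(\punkt,\punkt), \punkt\bigr)$ into an inner product of two torsion values via the cyclic identity $\tau(A,B,C) = \tau(C,A,B)$ for $3$-forms.

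For \eqref{CurvFormel}, I would expand $R'_{X,Y}Z = \nabla'_X \nabla'_Y Z - \nabla'_Y \nabla'_X Z - \nabla'_{[X,Y]}Z$ using $\nabla' = \nabla + \tfrac12 T$. The cross terms $\tfrac12\, T(X, \nabla_Y Z)$ cancel with their mirror images, the contribution $\tfrac12\,T(\nabla_X Y - \nabla_Y X - [X,Y], Z)$ vanishes because $\nabla$ is torsion-free, and what remains is $R_{X,Y}Z$ together with $\tfrac12\bigl[(\nabla_X T)(Y,Z) - (\nabla_Y T)(X,Z)\bigr]$ and the $T\circ T$ corrections. Using the $(X,Y)$-skew-symmetry of $\nabla T$ from \eqref{NablaAlphaRel}, the bracket collapses to $(\nabla_X T)(Y,Z)$, yielding the stated formula.

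Item \eqref{SymmetryRel} then comes essentially for free: the Riemannian piece is pair-symmetric, the $\nabla T$ piece is pair-symmetric because it is fully alternating by \eqref{NablaAlphaRel} and $(X,Y,Z,W) \mapsto (Z,W,X,Y)$ is an even permutation, and the $T\circ T$ correction rewrites, via the same cyclic $3$-form identity, as a constant multiple of $\<T(Y,W), T(X,Z)\> - \<T(X,W), T(Y,Z)\>$, which is visibly invariant under the pair swap. For \eqref{SectRel}, one inserts $(X,Y,Y,X)$ into \eqref{CurvFormel}; the $\nabla T$ term vanishes by skew-symmetry of $T$, the term $T(X, T(Y,Y))$ vanishes because $T(Y,Y)=0$, and the cyclic identity converts $\<T(Y, T(X,Y)), X\>$ into $\|T(X,Y)\|^2$, producing the claimed $-\tfrac14 \|T(X,Y)\|^2$.

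Finally for \eqref{BianchiRel}, I would invoke the first Bianchi identity for a general connection with torsion,
\begin{equation*}
\sum_{\mathrm{cyc}\ XYZ} R'_{X,Y} Z \;=\; \sum_{\mathrm{cyc}} (\nabla'_X T)(Y,Z) + \sum_{\mathrm{cyc}} T\bigl(T(X,Y), Z\bigr),
\end{equation*}
which under $\nabla' T = 0$ reduces to $\sum_{\mathrm{cyc}} R'_{X,Y} Z = \sum_{\mathrm{cyc}} T(T(X,Y), Z)$, so that $S := R' - T\bigl(T(\punkt,\punkt),\punkt\bigr)$ satisfies the algebraic Bianchi identity. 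The remaining Riemannian symmetries transfer because the correction equals $\<T(X,Y), T(Z,W)\>$ by the cyclic identity, which is skew in $(X,Y)$, skew in $(Z,W)$, and pair-symmetric, and the same properties hold for $R'$ by \eqref{SymmetryRel} together with it being the curvature of a metric connection. I expect the main technical nuisance to be bookkeeping the many sign and cyclic manipulations of $\tau$ without double-counting; I would manage this by immediately converting every occurrence of $\tau\bigl(\punkt, T(\punkt,\punkt), \punkt\bigr)$ into an inner product of two torsion values at the earliest opportunity.
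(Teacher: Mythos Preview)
Your proposal is correct and follows essentially the same route as the paper: both unpack~$\nabla'=\nabla+\tfrac12 T$, expand~$\nabla'T=0$ into the identity relating~$\nabla T$ to~$T\circ T$, deduce full alternation of~$\nabla\tau$, and then read off~\eqref{CurvFormel}--\eqref{BianchiRel} by direct substitution. The only organizational differences are that the paper derives~\eqref{SectRel} and~\eqref{BianchiRel} from an intermediate formula for~$R'$ with the~$\nabla T$ term eliminated, whereas you work from~\eqref{CurvFormel} and invoke the general first Bianchi identity with torsion; these are equivalent and equally short.
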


Assertion~\eqref{BianchiRel} will not be needed later on.

\begin{proof}
  If~$\nabla'$ has alternating torsion, then
  \begin{equation}\label{NablaFormel}
    \nabla'_XY=\nabla_XY+\frac12\,T(X,Y)\;.
  \end{equation}
  By a routine computation,
  the curvature tensor~$R'$ of~$\nabla'$ is given as
  \begin{multline}\label{CurvatureFormel}
    R'_{X,Y}Z
    =R_{X,Y}Z+\frac12\,(\nabla_XT)(Y,Z)-\frac12\,(\nabla_YT)(X,Z)\\
      +\frac14\,T\bigl(X,T(Y,Z)\bigr)-\frac14\,T\bigl(Y,T(X,Z)\bigr)\;.
  \end{multline}

  If~$\nabla'$ has parallel torsion,
  then
  \begin{equation}\label{NablaTFormel}
    0=(\nabla_XT)(Y,Z)+\frac12\,\Bigl(T\bigl(X,T(Y,Z)\bigr)
      +T\bigl(Y,T(Z,X)\bigr)+T\bigl(Z,T(X,Y)\bigr)\Bigr)\;,
  \end{equation}
  which shows that~$\nabla T$ is fully alternating.
  Because~$\nabla$ is metric and~$T$ is alternating,
  the form~$\nabla\tau=\<(\nabla_\punkt T)(\punkt,\punkt),\punkt\>$
  is also fully alternating.
  Thus
  \begin{multline*}
    (d\tau)(X,Y,Z,W)
    =(\nabla_X\tau)(Y,Z,W)-(\nabla_Y\tau)(X,Z,W)\\
      +(\nabla_Z\tau)(X,Y,W)-(\nabla_W\tau)(X,Y,Z)
    =4(\nabla_X\tau)(Y,Z,W)\;,
  \end{multline*}
  which gives~\eqref{NablaAlphaRel}.
  Inserting this into~\eqref{NablaTFormel} gives
  \begin{align*}
    (d\tau)(X,Y,Z,W)
    &=4\,\<(\nabla_XT)(Y,Z),W\>\\
    &=2\bigl(\<T(X,Y),T(Z,W)\>+\<T(Y,Z),T(X,W)\>+\<T(Z,X),T(Y,W)\>\bigr)\;,
  \end{align*}
  which is~\eqref{RiccatiRel}.

  From~\eqref{CurvatureFormel}, we get~\eqref{CurvFormel} by
  \begin{multline*}
    \<R'_{X,Y}Z,W\>
    =\<R_{X,Y}Z,W\>+\<(\nabla_XT)(Y,Z),W\>\\
      +\frac14\,\<T(X,Z),T(Y,W)\>-\frac14\,\<T(Y,Z),T(X,W)\>
    =\<R'_{Z,W}X,Y\>\;,
  \end{multline*}
  which implies~\eqref{SymmetryRel}
  because~$\nabla\tau$ is fully alternating.
  Inserting~\eqref{NablaTFormel} gives
  \begin{equation}\label{CurvatureFormel2}
    R'_{X,Y}Z
    =R_{X,Y}Z-\frac12\,T\bigl(Z,T(X,Y)\bigr)\\
      -\frac14\,T\bigl(X,T(Y,Z)\bigr)-\frac14\,T\bigl(Y,T(Z,X)\bigr)\;,
  \end{equation}
  from which~\eqref{SectRel} follows.
  Summing over all cyclic permutations proves the Bianchi identity
  in~\eqref{BianchiRel}.
  The other symmetries of~$S$ are obvious.
\end{proof}

\begin{Remark}\label{BianchiRem}
  All assertions we need to prove Theorem~\ref{MainThm} follow
  if~$\nabla'$ has alternating torsion
  and the conclusions~\eqref{NablaAlphaRel} and~\eqref{SymmetryRel}
  of Lemma~\ref{ParTorLemma} hold.
  However,
  from the computations in the proof above it is easy to see that
  these assumptions already imply that~$\nabla'$ has parallel torsion.
\end{Remark}

\section{Index-Theoretical Preliminaries}\label{IndexSect}
We explain the index-theoretical condition used in~\cite{GS2}
and in Theorem~\ref{MainThm}.
We also correct a mistake in the treatment of odd-dimensional spaces
in~\cite{GS} and~\cite{Gvawi}.

Recall that~$\Lambda^\bullet T^*M$ is a Dirac bundle,
with Clifford multiplication given by~$c(X)=\<X,\punkt\>\wedge-\iota_X$
and with a connection~$\nabla$ induced by the Levi-Civita connection~$\nabla$.
By a {\em generalised Dirac subbundle,\/}
we mean a subbundle~$W$ that is closed under this Clifford multiplication.
Note that a Dirac subbundle is also required to be $\nabla$-parallel.
In contrast,
we allow to choose a different Clifford connection~$\tilde\nabla$
on~$\Lambda^\bullet T^*M$
such that~$W$ becomes $\tilde\nabla$-parallel.
Here, a Clifford connection is a connection~$\tilde\nabla$ such that
\begin{equation*}
  \tilde\nabla_X\bigl(c(Y)\alpha\bigr)
  =c\bigl(\nabla_XY\bigr)\alpha+c(Y)\tilde\nabla_X\alpha
\end{equation*}
for all vector fields~$X$, $Y$ and all forms~$\alpha$.
In the following,
we will call the operator~$\tilde D=c\circ\tilde\nabla$
a {\em generalised Hodge-Dirac operator\/} on~$W$.

\begin{Definition}\label{IndexDef}
  We say that a generalised Hodge-Dirac operator~$\tilde D$
  on a generalised Dirac subbundle bundle~$W\subset\Lambda^\bullet T^*M$
  has {\em nonvanishing index,\/}
  if either
  \begin{enumerate}
  \item\label{EvenIndex}
    $M$ is even-dimensional, $W$ splits as~$W^+\oplus W^-$
    such that Clifford multiplication with odd elements
    exchanges~$W^+$ and~$W^-$,
    the operator~$\tilde D$ splits
    as~$\tilde D^\pm\colon\Gamma(W^\pm)\to\Gamma(W^\mp)$,
    and~$0\ne\ind(\tilde D^+)\in\Z$, or
  \item\label{OddIndex}
    $M$ is $4k+1$-dimensional, oriented,
    and~$\dim_\R(\ker\tilde D|_{W\cap\Lambda^\even T^*M})$ is odd.
  \end{enumerate}
\end{Definition}

In situation~\eqref{EvenIndex},
the analytic index is invariant under perturbations
and thus independent of our choice of~$\tilde\nabla$.
In situation~\eqref{OddIndex},
let~$e_1$, \dots, $e_m$ denote a local oriented orthonormal frame,
and let~$\omega_\R=c(e_1)\cdots c(e_m)$ denote
the real Clifford volume element.
Then the operator~$\omega_\R\tilde D$ acts on~$W\cap\Lambda^\even T^*M$
as a real, skew-adjoint Fredholm operator.
In particular, the parity of
\begin{equation*}
  \dim\ker(\tilde D|_{W\cap\Lambda^\even T^*M})
  =\dim\ker(\omega_\R\tilde D|_{W\cap\Lambda^\even T^*M})
\end{equation*}
is again preserved under deformations.

\begin{Example}\label{IndexExample}
  There are three typical situations where the full Hodge-Dirac operator
  on~$W=\Lambda^\bullet T^*M$ has nonvanishing index.
  \begin{enumerate}
  \item\label{EulerCond}
    If~$M$ is even-dimensional and the Euler number~$\chi(M)$ is nonzero,
    we may take~$W^\pm=\Lambda^{\even/\odd}T^*M$.
    This happens for instance if~$G/H$ is a quotient of compact Lie
    groups of equal rank.
  \item\label{SignCond}
    If~$M$ is oriented, $4k$-dimensional, and its signature~$\sigma(M)$
    is nonzero,
    we let~$W^\pm$ be the selfadjoint
    and anti-selfadjoint forms.
    In both cases, we are in situation~\eqref{EvenIndex}.
  \item\label{KervaireCond}
    On the other hand,
    if~$M$ is $(4k+1)$-dimensional and oriented and the Kervaire
    semicharacteristic~$k(M)$ does not vanish,
    then we are in situation~\eqref{OddIndex}.
    The sphere~$S^{4k+1}$ is an example.
  \end{enumerate}
\end{Example}

At this point,
we want to point out a mistake in~\cite{GS} and~\cite{Gvawi}.
In those papers,
we suggested to ``stabilise'' manifolds~$M$ of odd dimension by~$S^{2n}$
to obtain a manifold of dimension~$8k+1$.
Unfortunately,
this does not help if~$\dim M\equiv-1$ mod~$4$.

\begin{Example}\label{OddIndexExample}
  There are situations where a suitable Dirac subbundle
  with a Hodge-Dirac operator of nonvanishing index does not exist.
  Assume that~$M$ and~$N$ are oriented Riemannian manifolds
  of dimensions~$m\equiv-1$ and~$n\equiv 2$ mod~$4$ respectively,
  then~$\dim(M\times N)=4k+1$.
  Let~$\tilde\nabla$ be the tensor product of Clifford connections
  on~$\Lambda^\bullet T^*M$ and~$\Lambda^\bullet T^*N$,
  and let~$W\subset\Lambda^\bullet T^*(M\times N)$ be a $\tilde\nabla$-parallel
  Dirac subbundle.
  Then the deformed Dirac operators~$\tilde D_M$ and~$\tilde D_N$
  both act on~$W$ and anticommute.
  Thus, we have
  \begin{equation*}
    \tilde D^2_{M\times N}=\tilde D^2_M+\tilde D^2_N\qquad\text{and}\qquad
    \ker(\tilde D^2_{M\times N})=\ker(\tilde D^2_M)\cap\ker(\tilde D^2_N)\;.
  \end{equation*}
  The real Clifford volume element~$\omega_{N,\R}$ of~$N$ acts
  on~$\Lambda^\bullet T^*(M\times N)$.
  It is even,
  $\tilde\nabla$-parallel and has square~$\omega_{N,\R}^2=-1$.
  It commutes with~$\tilde D_M$ and anticommutes with~$\tilde D_N$,
  so it acts both on~$\ker(\tilde D^2_M)$ and on~$\ker(\tilde D^2_N)$.
  We may regard~$\omega_{N,\R}$ as a complex structure on the real vector
  space~$\ker(\tilde D^2_{M\times N}|_{W\cap\Lambda^\even T^*M})$,
  which is therefore even-dimensional.
  In particular,
  there are symmetric spaces~$G/H$ with~$\rk G-\rk H=1$
  to which the arguments of~\cite{GS} and~\cite{Gvawi} do not apply.
\end{Example}

\section{Modified Dirac operators}\label{DiracSection}
We construct a modified Dirac operator on Dirac bundles~$(W,\nabla^W)$
over Riemannian manifolds, in particular on the bundle of exterior forms,
starting from a connection~$\nabla'$ with parallel and alternating torsion.
In the special case of homogeneous spaces,
we recover the reductive Dirac operator of~\cite{G1} and~\cite{G2},
also known as Kostant's cubic Dirac operator.
A similar modification of the untwisted Dirac operator has been considered
by Agricola and Friedrich, see~\cite{AF}.

We assume that~$M$ is spin with a fixed spin structure,
and let~$SM\to M$ denote its complex spinor bundle.
Then~$(W,\nabla^W)$
is the tensor product of the spinor bundle~$(SM,\nabla^{SM})$
and another Hermitian vector bundle~$(V,\nabla^V)$.
If~$W$ is a Dirac subbundle of~$\Lambda^\bullet T^*M$,
then we may regard~$V$ as a subbundle of~$SM$ because
\begin{equation}\label{LambdaFormel}
  \Lambda^\bullet T^*M\otimes_\R\C=
  \begin{cases}
    SM\otimes_\C SM	&\text{if $\dim M$ is even, and}\\
    2(SM\otimes_\C SM)	&\text{if $\dim M$ is odd.}
  \end{cases}
\end{equation}
If~$M$ is not spin,
then still all Dirac bundles locally look like twisted spinor bundles.
Since all our constructions in the next chapters are local,
there is no loss of generality in assuming~$M$ to be spin.

Let~$\nabla^{TM}$ denote the Levi-Civita connection on~$M$.
Let us assume that we have fixed~$q\in P_{\Spin}$ over~$p\in M$,
and let us assume that we have a local section~$s$ of~$P_{\Spin}$
such that~$\im d_ps\subset\ker\omega$.
Let us choose a coordinate system near~$p$ such that the coordinate
vectors at~$p$ are precisely~$e_1$, \dots, $e_m$.
In particular, we then have~$\nabla^{TM}_{e_i}e_j=0$ for all~$i$, $j$.
Let us write~$c_i$ for Clifford multiplication with the vector~$e_i$.

The Levi-Civita connection induces a connection~$\nabla^{SM}$ on~$SM$
that is compatible with Clifford multiplication.
Let~$(V,\nabla^V)$ be as above,
and let~$R^V$ denote the curvature of~$\nabla^V$.
Note that even if~$M$ is not spin,
the curvature~$R^V\in\Omega^2(M;\End W)$ and the
twisted Riemannian Dirac operator~$D^W$ on~$\Gamma(W)$
are still well-defined.

Let~$\tau\in\Omega^3(M)$ be an alternating form with coefficients
\begin{equation}\label{AlphaFormel}
  \tau(e_i,e_j,e_k)=\tau_{ijk}\;,
\end{equation}
then the connection
\begin{equation}
  \nabla^{\prime TM}_{e_i}e_j
  =\nabla^{TM}_{e_i}e_j+\frac12\,\sum_{k=1}^m\tau_{ijk}\,e_k
\end{equation}
has alternating torsion~$T$ with~$\tau=\<T(\punkt,\punkt),\punkt\>$.

The connection~$\nabla^{\prime TM}$ induces a connection~$\nabla^{\prime SM}$
on~$SM$ such that
\begin{equation}\label{NablaXSMDef}
  \nabla^{\prime SM}_{e_i}
  =\nabla^{SM}_{e_i}+\frac18\sum_{j,k=1}^m\tau_{ijk}c_jc_k\;.
\end{equation}
We also consider another connection~$\tilde\nabla^{SM}$ on~$SM$
that is given by
\begin{equation*}
  \tilde\nabla^{SM}_{e_i}
  =\nabla^{SM}_{e_i}+\frac1{24}\sum_{j,k=1}^m\tau_{ijk}c_jc_k\;.
\end{equation*}
We then regard the modified Dirac twisted operator
\begin{equation}\label{DslFormel}
  \tilde D^W
  =\sum_{i=1}^mc_i(\tilde\nabla^{SM}\otimes\nabla^V)_{e_i}
  =\sum_{i=1}^mc_i(\nabla^{SM}\otimes\nabla^V)_{e_i}
    +\frac1{24}\sum_{i,j,k=1}^m\tau_{ijk}c_ic_jc_k\;.
\end{equation}
The square of this operator has been computed by Agricola and Friedrich.

\begin{Lemma}[\cite{AF}, Theorem~6.2]\label{BLWLemma}
  The square of~$\tilde D^W$ is given by
  \begin{multline*}
    (\tilde D^W)_p^2
    =(\nabla^{\prime SM}\otimes\nabla^V)_p^*(\nabla^{\prime SM}\otimes\nabla^V)
	+\frac\kappa4
	+\frac12\sum_{i,j=1}^mc_ic_j\,R^V_{e_i,e_j}\\
	+\frac1{96}\sum_{i,j,k,l=1}^m(d\tau)_{ijkl}\,c_ic_jc_kc_l
	-\frac1{48}\sum_{i,j,k=1}^m\tau_{ijk}^2\;.\quad\qed
  \end{multline*}
\end{Lemma}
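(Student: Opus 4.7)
The plan is to decompose $\tilde D^W = D^W + \Phi$, where $D^W = \sum_i c_i(\nabla^{SM}\otimes\nabla^V)_{e_i}$ is the usual Riemannian twisted Dirac operator and $\Phi := \frac{1}{24}\sum_{i,j,k}\tau_{ijk}c_ic_jc_k$ is the pointwise Clifford endomorphism representing the action of $\frac14\tau$. Then $(\tilde D^W)^2 = (D^W)^2 + \{D^W,\Phi\} + \Phi^2$, and I would start from the classical twisted Lichnerowicz formula
\begin{equation*}
(D^W)^2 = (\nabla^{SM}\otimes\nabla^V)^*(\nabla^{SM}\otimes\nabla^V) + \frac{\kappa}{4} + \frac12\sum_{i,j=1}^m c_ic_jR^V_{e_i,e_j}.
\end{equation*}
The task is then to exchange the Bochner Laplacian of $\nabla^{SM}\otimes\nabla^V$ for that of $\nabla^{\prime SM}\otimes\nabla^V$, and to collect all remaining first- and zeroth-order Clifford contributions.

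Working at a point $p$ in a synchronous frame for $\nabla^{TM}$ (so $\nabla^{TM}_{e_i}e_j|_p=0$, and by the antisymmetry of $T$ also $\nabla^{\prime TM}_{e_i}e_i|_p=0$), I write $\nabla^{\prime SM}_{e_i}=\nabla^{SM}_{e_i}+A_i$ with $A_i=\tfrac{1}{8}\sum_{j,k}\tau_{ijk}c_jc_k$. Expanding $(\nabla^{\prime SM}\otimes\nabla^V)_{e_i}^2$ at $p$ produces a cross-term $[\nabla_{e_i},A_i]=\tfrac{1}{8}\sum_{j,k}(\nabla_{e_i}\tau)_{ijk}c_jc_k$, which vanishes by the total antisymmetry of $\nabla\tau$ (Lemma~\ref{ParTorLemma}\eqref{NablaAlphaRel}). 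This yields
\begin{equation*}
(\nabla^{SM}\otimes\nabla^V)^*(\nabla^{SM}\otimes\nabla^V)=(\nabla^{\prime SM}\otimes\nabla^V)^*(\nabla^{\prime SM}\otimes\nabla^V)+2\sum_iA_i(\nabla^{SM}\otimes\nabla^V)_{e_i}+\sum_iA_i^2.
\end{equation*}
On the other hand, $\{D^W,\Phi\}=\sum_l c_l[\nabla_{e_l},\Phi]+\sum_l(c_l\Phi+\Phi c_l)(\nabla^{SM}\otimes\nabla^V)_{e_l}$, and a short Clifford-algebra computation yields $c_l\Phi+\Phi c_l=-\tfrac14\sum_{j,k}\tau_{ljk}c_jc_k=-2A_l$, so the first-order piece of $\{D^W,\Phi\}$ exactly cancels the $2\sum_iA_i\,\nabla_{e_i}$ term. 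The pointwise piece $\sum_l c_l[\nabla_{e_l},\Phi]$ becomes $\tfrac{1}{96}\sum(d\tau)_{lijk}c_lc_ic_jc_k$ after applying $\nabla\tau=\tfrac14 d\tau$ (again Lemma~\ref{ParTorLemma}\eqref{NablaAlphaRel}).

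What remains is to check $\sum_i A_i^2+\Phi^2=-\tfrac{1}{48}\sum_{i,j,k}\tau_{ijk}^2$. I would decompose each square by Clifford degree. The scalar parts combine arithmetically: $\Phi^2=\tfrac{1}{16}c(\tau)^2$ contributes $+\tfrac{1}{96}\sum\tau_{ijk}^2$ (since $(c_ic_jc_k)^2=1$ for distinct indices), while $\sum_iA_i^2=\tfrac{1}{16}\sum_ic(\iota_{e_i}\tau)^2$ contributes $-\tfrac{1}{32}\sum\tau_{ijk}^2$ (since $(c_jc_k)^2=-1$), summing to the required $-\tfrac{1}{48}\sum\tau_{ijk}^2$. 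The $6$-form part of $\Phi^2$ vanishes because $\tau\wedge\tau=0$ for any $3$-form, and the $2$-form contributions of both squares vanish by a symmetry/antisymmetry parity between torsion coefficients and Clifford products. The main obstacle is the $4$-form part of $\sum_iA_i^2+\Phi^2$: one must verify it cancels identically, so that no extra contribution modifies the coefficient $\tfrac{1}{96}$ of the already-produced $(d\tau)$-term. This cancellation is governed precisely by the explicit formula for $d\tau$ in terms of torsion-squared expressions from Lemma~\ref{ParTorLemma}\eqref{RiccatiRel}, and is the one genuinely combinatorial step of the proof.
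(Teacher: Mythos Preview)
The paper gives no proof of this lemma: it simply quotes \cite{AF}, Theorem~6.2, and marks it with~$\qed$. Your decomposition~$\tilde D^W=D^W+\Phi$, the use of the classical Lichnerowicz formula, and the exchange of Bochner Laplacians via~$\nabla'^{SM}_{e_i}=\nabla^{SM}_{e_i}+A_i$ constitute exactly the standard route to such a formula, and your bookkeeping of the first-order and derivative terms is correct.

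There is, however, a misattribution in your final step. You say the vanishing of the $4$-form part of~$\sum_iA_i^2+\Phi^2$ is ``governed precisely by the explicit formula for~$d\tau$ in terms of torsion-squared expressions from Lemma~\ref{ParTorLemma}\eqref{RiccatiRel}''. This cannot be right: $A_i$ and~$\Phi$ are pointwise Clifford expressions in the coefficients~$\tau_{ijk}$ alone, with no derivatives, so Lemma~\ref{ParTorLemma}\eqref{RiccatiRel} (which describes~$d\tau$) is irrelevant. The required fact is the purely algebraic Clifford identity
\[
  \biggl(\frac1{24}\sum_{i,j,k}\tau_{ijk}c_ic_jc_k\biggr)^{\!2}
  \;=\;-\sum_{i}\biggl(\frac18\sum_{j,k}\tau_{ijk}c_jc_k\biggr)^{\!2}
  -\frac1{48}\sum_{i,j,k}\tau_{ijk}^2,
\]
valid for \emph{any} $3$-form~$\tau$; equivalently~$\Phi^2+\sum_iA_i^2$ is scalar. (The paper records exactly this identity later, in the proof of Proposition~\ref{SquareProp}.) One checks it by observing that in the anticommutator~$\{c_Ic_J\!+c_Jc_I\}$ of two degree-$3$ Clifford monomials, the contribution is degree~$4$ when~$|I\cap J|=1$ and zero when~$|I\cap J|\in\{0,2\}$; a short computation then shows the degree-$4$ parts of~$\Phi^2$ and~$\sum_iA_i^2$ are negatives of each other. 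Once you replace your appeal to Lemma~\ref{ParTorLemma}\eqref{RiccatiRel} by this direct Clifford argument, your proof is complete. As a side remark, your uses of Lemma~\ref{ParTorLemma}\eqref{NablaAlphaRel} to kill~$[\nabla_{e_i},A_i]$ and the degree-$2$ part of~$\sum_lc_l[\nabla_{e_l},\Phi]$ are convenient but not essential: without parallel torsion both produce terms proportional to~$c(\delta\tau)$, and these cancel against each other.
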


\section{Some Curvature Formulas}\label{EstSect}
Let~$p\in M$ and let~$\bar e_1(p)$, \dots, $\bar e_m(p)$
be an orthonormal base of~$(T_pM,\bar g_p)$.
We may assume that there exist~$\lambda_1(p)$, \dots, $\lambda_m(p)>0$
such that~$\bar e_1(p)=\lambda_1(p)e_1(p)$, \dots,
$\bar e_m(p)=\lambda_m (p)e_m(p)$,
where~$e_1(p)$, \dots, $e_m(p)$ is an orthonormal base
with respect to~$g$ as before.
Because~$\bar g\ge g$ on~$\Lambda^2TM$,
we have~$\lambda_i(p)\lambda_j(p)\le 1$ for all~$p\in M$ and all~$i\ne j$.
From now on, we fix~$p$ and write simply~$\bar e_i=\lambda_ie_i$.
We may identify the spinor spaces for both metrics in such a way
that the Clifford actions at~$p$ are related by
\begin{equation*}
  c(e_i)=\bar c(\bar e_i)=:c_i\;.
\end{equation*}

If~$\nabla'$ is a metric connection with parallel
and alternating torsion~$T$ on~$(M,g)$,
we consider the alternating form
\begin{equation*}
  \tau=g\bigl(T(\punkt,\punkt),\punkt\bigr)\;.
\end{equation*}
Let us put~$\bar\tau=\tau$ for the moment---%
we will consider a different~$\bar\tau$ in Section~\ref{MapSection}.
With respect to the new metric~$\bar g$,
the coefficients of this form at~$p$ are given by
\begin{equation*}
  \bar\tau_{ijk}=\tau(\bar e_i,\bar e_j,\bar e_k)
  =\lambda_i\lambda_j\lambda_k\,\tau_{ijk}\;.
\end{equation*}

Let~$\widehat{SM}\to M$ denote a second copy of the spinor bundle
with respect to the old metric~$g$,
equipped with the connection~$\nabla^{\prime SM}$ as in~\eqref{NablaXSMDef}.
Let~$V\to M$ be a $\nabla^{\prime SM}$-parallel subbundle of~$\widehat{SM}$,
and let~$W=SM\otimes V$,
then~$W\subset\Lambda^\bullet T^*M\otimes_\R\C$ by~\eqref{LambdaFormel}.
The bundle~$W$ is a $\nabla^{\prime SM}$-parallel Dirac subbundle
of~$\Lambda^\bullet T^*M_\R\C$ as in Theorem~\ref{MainThm}.
We will write~$\hat c$ for the Clifford multiplication of~$TM$
on~$\widehat{SM}$.
Then at the point~$p$,
the curvature of~$\hat\nabla'|_V$ is given by
\begin{equation*}
  \hat R'_{\bar e_i,\bar e_j}
  =\frac14\sum_{k,l=1}^m\<R'_{\bar e_i,\bar e_j}e_k,e_l\>\,\hat c_k\hat c_l
  =\frac14\,\lambda_i\lambda_j\sum_{k,l=1}^mR'_{ijkl}\,\hat c_k\hat c_l\;.
\end{equation*}

Let~$\bar\nabla$ denote the Levi-Civita connection of the metric~$\bar g$.
We regard the connection
\begin{equation}\label{NablaxquerDef}
  \bar\nabla'_UV
  =\bar\nabla_UV+\frac12\sum_{i=1}^m\bar\tau(U,V,\bar e_i)\,\bar e_i
\end{equation}
on~$TM$.
Note that~$\bar\nabla'$ has alternating torsion~$\bar T$
with~$\bar g\bigl(\bar T(\punkt,\punkt),\punkt\bigr)=\bar\tau$,
but in general~$\bar\nabla'\bar T\ne 0$.
Let~$\bar D^W$ denote the Riemannian Dirac operator on~$(M,\bar g)$
twisted by~$(V,\nabla')$.
We modify~$\bar D^W$ as in~\eqref{DslFormel},
obtaining the operator
\begin{equation*}
  \tilde{\bar D}^W
  =\bar D^W+\frac1{24}\sum_{i,j,k=1}^m\bar\tau_{ijk}\,c_ic_jc_k
  =\bar D^W+\frac1{24}\sum_{i,j,k=1}^m\lambda_i\lambda_j\lambda_k\,
    \tau_{ijk}\,c_ic_jc_k\;.
\end{equation*}

The Bochner-Lichnerowicz-Weitzenb\"ock formula in Lemma~\ref{BLWLemma} gives
\begin{multline*}
  \bigl(\tilde{\bar D}^W\bigr)_p^2
  =(\bar\nabla'\otimes\hat\nabla')^*
      (\bar\nabla'\otimes\hat\nabla')+\frac{\bar\kappa}4
    +\frac18\sum_{i,j,k,l=1}^m\lambda_i\lambda_j\,R'_{ijkl}\,
      c_ic_j\,\hat c_k\hat c_l\\
    +\frac1{96}\sum_{i,j,k,l=1}^m
      (d\bar\tau)(\bar e_i,\bar e_j,\bar e_k,\bar e_l)\,c_ic_jc_kc_l
    -\frac1{48}\sum_{i,j,k=1}^m\bar\tau_{ijk}^2\;.
\end{multline*}
We rewrite this as
\begin{align}\label{BLWFormel}
  \begin{split}
    \bigl(\tilde{\bar D}^W\bigr)_p^2
    &=(\bar\nabla'\otimes\hat\nabla')^*
	  (\bar\nabla'\otimes\hat\nabla')+\frac{\bar\kappa}4\\
    &\qquad
	+\frac1{16}\sum_{i,j,k,l=1}^mR^{\prime TM}_{ijkl}\,
	  (\lambda_i\lambda_j\,c_ic_j+\hat c_i\hat c_j)\,
	  (\lambda_k\lambda_l\,c_kc_l+\hat c_k\hat c_l)\\
    &\qquad
	-\frac1{16}\sum_{i,j,k,l=1}^m\lambda_i\lambda_j\lambda_k\lambda_l\,
	  R^{\prime TM}_{ijkl}\,c_ic_jc_kc_l
	-\frac1{16}\sum_{i,j,k,l=1}^mR^{\prime TM}_{ijkl}\,
	  \hat c_i\hat c_j\hat c_k\hat c_l\\
    &\qquad
	+\frac1{96}\sum_{i,j,k,l=1}^m\lambda_i\lambda_j\lambda_k\lambda_l\,
	  (d\tau)_{ijkl}\,c_ic_jc_kc_l
	-\frac1{48}\sum_{i,j,k=1}^m\lambda_i^2\lambda_j^2\lambda_k^2
	  \,\tau_{ijk}^2\;.
  \end{split}
\end{align}
We will now use the various formulas in \ref{ParTorLemma}
to simplify the right hand side of~\eqref{BLWFormel}.

\begin{Proposition}\label{SquareProp}
  We have
  \allowdisplaybreaks
  \begin{align*}
    \frac1{16}\sum_{i,j,k,l=1}^m\lambda_i\lambda_j\lambda_k\lambda_l
      \,R^{\prime TM}_{ijkl}\,c_ic_jc_kc_l
    &=\frac\kappa8-\frac1{32}\sum_{i,j,k=1}^m\tau_{ijk}^2
      -\frac18\sum_{i,j=1}^m(1-\lambda_i^2\lambda_j^2)\,R'_{ijji}\\
    &\qquad
      +\frac1{96}\sum_{i,j,k,l=1}^m\lambda_i\lambda_j\lambda_k\lambda_l\,
        (d\tau)_{ijkl}\,c_ic_jc_kc_l\;,\tag1\label{AijkComparison}\\
    \frac1{16}\sum_{i,j,k,l=1}^mR^{\prime TM}_{ijkl}\,
        \hat c_i\hat c_j\hat c_k\hat c_l
    &=\frac\kappa8+\frac1{96}\sum_{i,j,k=1}^m\tau_{ijk}^2
      -\biggl(\frac1{12}\sum_{i,j,k=1}^m\tau_{ijk}
        \,\hat c_i\hat c_j\hat c_k\biggr)^2\;.\tag2\label{HatCurvLemma}
  \end{align*}
\end{Proposition}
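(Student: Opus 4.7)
Both identities reduce to the combinatorics of expanding the degree-four Clifford product $c_ic_jc_kc_l$ (resp.\ $\hat c_i\hat c_j\hat c_k\hat c_l$) according to the intersection pattern of $\{i,j\}$ and $\{k,l\}$, followed by invocation of the algebraic symmetries of $R'$ collected in Lemma~\ref{ParTorLemma}. My plan is to carry this out case by case and then reassemble the pieces.

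For (1), I will split $\frac1{16}\sum_{ijkl}\lambda_i\lambda_j\lambda_k\lambda_l R'_{ijkl}\,c_ic_jc_kc_l$ according to $|\{i,j\}\cap\{k,l\}|\in\{0,1,2\}$. The part with $\{i,j\}=\{k,l\}$ collapses to $\frac18\sum_{ij}\lambda_i^2\lambda_j^2 R'_{ijji}$ via $c_ic_jc_ic_j=-1$ and $c_ic_jc_jc_i=1$; applying Lemma~\ref{ParTorLemma}\eqref{SectRel} in the form $\sum_{ij}R'_{ijji}=\kappa-\frac14\sum_{ijk}\tau_{ijk}^2$, and then writing $\lambda_i^2\lambda_j^2=1-(1-\lambda_i^2\lambda_j^2)$, produces the three scalar summands on the right-hand side. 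The part with $|\{i,j\}\cap\{k,l\}|=1$ reduces after the Clifford reduction to four Ricci-type contractions of $R'$ that cancel pairwise by the antisymmetries in $(i,j)$ and $(k,l)$ together with the pair symmetry of Lemma~\ref{ParTorLemma}\eqref{SymmetryRel}, just as in the classical Lichnerowicz computation. For the remaining fully antisymmetric piece, Lemma~\ref{ParTorLemma}\eqref{BianchiRel} says that $S=R'-T(T(\punkt,\punkt),\punkt)$ satisfies the first Bianchi identity, so only the $T(T)$ correction survives full antisymmetrization; Lemma~\ref{ParTorLemma}\eqref{RiccatiRel} identifies this antisymmetrization with $\frac16 d\tau$, giving the final $\frac1{96}\sum\lambda_i\lambda_j\lambda_k\lambda_l (d\tau)_{ijkl}c_ic_jc_kc_l$ term.

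For (2), I first specialize the just-proved identity (1) to $\lambda_i\equiv 1$ (with $\hat c$ in place of $c$), which kills the $(1-\lambda_i^2\lambda_j^2)$ term. To convert the resulting $d\tau$ term into a square, I will use that $\frac1{12}\sum_{ijk}\tau_{ijk}\hat c_i\hat c_j\hat c_k=\frac12\hat c(\tau)$ (each ordered triple of distinct indices contributes six equal copies by full antisymmetry of both factors), so the square on the right-hand side of (2) equals $\frac14\hat c(\tau)^2$. Since $\hat c(\tau)$ is odd-degree, its Clifford reverse equals $-\hat c(\tau)$ and hence $\hat c(\tau)^2$ is reversal-invariant; since reversal acts by $(-1)^{j(j-1)/2}$ on degree-$j$ elements, this forces the degree-$2$ and degree-$6$ components of $\hat c(\tau)^2$ to vanish, and in fact the degree-$6$ piece is already zero as the Clifford image of $\tau\wedge\tau=0$ (odd forms). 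Hence $\hat c(\tau)^2=|\tau|^2+\Phi$ with $\Phi$ of Clifford degree~$4$, and a direct expansion of $\Phi$ by contracting one index between the two copies of $\tau$, followed by Lemma~\ref{ParTorLemma}\eqref{RiccatiRel}, yields $\Phi=-\frac1{24}\sum(d\tau)_{ijkl}\hat c_i\hat c_j\hat c_k\hat c_l$. Combining with $|\tau|^2=\frac16\sum_{ijk}\tau_{ijk}^2$ matches the specialization of (1) with the claimed right-hand side of (2).

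The main obstacle is purely combinatorial: matching the multiplicities and signs in the degree-$4$ Clifford expansion of $\hat c(\tau)^2$ with the $d\tau$ coefficients furnished by Lemma~\ref{ParTorLemma}\eqref{RiccatiRel}, and confirming that the Ricci-type contractions in the degree-$2$ part of (1) cancel as claimed. The conceptual ingredients---the first Bianchi identity for the corrected curvature $S$, the reversal-parity argument, and the odd-degree vanishing $\tau\wedge\tau=0$---are routine once the bookkeeping is set up.
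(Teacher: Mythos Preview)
Your argument is correct. It is, however, organized along a genuinely different axis from the paper's proof.

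For~(1), the paper does not stratify by the coincidence pattern of $\{i,j\}$ and $\{k,l\}$. Instead it substitutes the expression for~$R'$ from Lemma~\ref{ParTorLemma}\eqref{CurvFormel}, i.e.\ writes~$R'=R+(\text{torsion terms})$, and then invokes the classical Schr\"odinger--Lichnerowicz reduction for the Levi-Civita part~$R$ to obtain the Riemannian sectional term~$2\sum\lambda_i^2\lambda_j^2R_{ijji}$; the torsion remainder is simplified by the Clifford relations and Lemma~\ref{ParTorLemma}\eqref{RiccatiRel}, and at the end one trades~$R$ back for~$R'$ via~\eqref{SectRel}. Your route stays with~$R'$ throughout: pair symmetry~\eqref{SymmetryRel} alone handles the single-coincidence cancellation (as you say), and the fully antisymmetric piece is controlled by Lemma~\ref{ParTorLemma}\eqref{BianchiRel} together with~\eqref{RiccatiRel}. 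It is worth noting that the paper explicitly remarks that~\eqref{BianchiRel} ``will not be needed later on''---your proof shows a natural place where it earns its keep, giving a cleaner explanation of why the fourth-order Clifford part is exactly the~$d\tau$ term.

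For~(2), the paper again decomposes~$R'$ (now via the variant~\eqref{CurvatureFormel2}) and separately establishes the identity
\[
  \Bigl(\tfrac1{24}\sum\tau_{ijk}\hat c_i\hat c_j\hat c_k\Bigr)^2
  =-\sum_i\Bigl(\tfrac18\sum_{jk}\tau_{ijk}\hat c_j\hat c_k\Bigr)^2-\tfrac1{48}\sum\tau_{ijk}^2
\]
by brute Clifford expansion. Your approach---specializing~(1) and computing~$\hat c(\tau)^2$ by the reversal-parity argument together with~$\tau\wedge\tau=0$---is more conceptual and avoids rewriting~$R'$ a second time. One small comment: the reversal argument already kills the degree-$6$ part, so the observation~$\tau\wedge\tau=0$ is an independent confirmation rather than an additional ingredient; and your phrase ``cancel pairwise'' for the single-coincidence contractions is slightly loose---the four cases first coalesce to a single Ricci-type sum, which then vanishes because pair symmetry makes~$\sum_p\lambda_p^2R'_{ippl}$ symmetric in~$(i,l)$ against the antisymmetric~$c_ic_l$.
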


\begin{proof}
  Using Lemma~\ref{ParTorLemma}
  and the well-known computation of the scalar curvature
  in the classical Schr\"odinger-Lichnerowicz formula,
  we compute
  \begin{multline*}
    \sum_{i,j,k,l=1}^m\lambda_i\lambda_j\lambda_k\lambda_l\,
	R^{\prime TM}_{ijkl}\,c_ic_jc_kc_l
    =2\sum_{i,j=1}^m\lambda_i^2\lambda_j^2R_{ijji}
      +\frac14\sum_{i,j,k,l=1}^m\lambda_i\lambda_j\lambda_k\lambda_l\,
	(d\tau)_{ijkl}\,c_ic_jc_kc_l\\
      +\frac14\sum_{i,j,k,l,p=1}^m\lambda_i\lambda_j\lambda_k\lambda_l\,
	(\tau_{jkp}\tau_{ipl}-\tau_{ikp}\tau_{jpl})\,c_ic_jc_kc_l\;.
  \end{multline*}
  By the Clifford relations and Lemma~\ref{ParTorLemma}~\eqref{RiccatiRel},
  this becomes
  \begin{multline*}
    \sum_{i,j,k,l=1}^m\lambda_i\lambda_j\lambda_k\lambda_l\,
	R^{\prime TM}_{ijkl}\,c_ic_jc_kc_l\\
    \begin{aligned}
      &=2\sum_{i,j=1}^m\lambda_i^2\lambda_j^2R_{ijji}
	+\frac14\sum_{i,j,k,l=1}^m\lambda_i\lambda_j\lambda_k\lambda_l\,
	  (d\tau)_{ijkl}\,c_ic_jc_kc_l
	-\frac12\sum_{i,j,k=1}^m\,\lambda_i^2\lambda_j^2\,\tau_{ijk}^2\\
      &\qquad
	-\frac16\sum_{i,j,k,l,p=1}^m\lambda_i\lambda_j\lambda_k\lambda_l\,
	  (\tau_{ijp}\tau_{pkl}+\tau_{ikp}\tau_{plj}
	+\tau_{ilp}\tau_{pjk})\,c_ic_jc_kc_l\\
      &=2\sum_{i,j=1}^m\lambda_i^2\lambda_j^2R_{ijji}
	-\frac12\sum_{i,j,k=1}^m\,\lambda_i^2\lambda_j^2\,\tau_{ijk}^2
	+\frac16\sum_{i,j,k,l=1}^m\lambda_i\lambda_j\lambda_k\lambda_l\,
	  (d\tau)_{ijkl}\,c_ic_jc_kc_l\;.
    \end{aligned}
  \end{multline*}
  Using Lemma~\eqref{ParTorLemma}~\ref{SectRel},
  we obtain~\eqref{AijkComparison}.

  To prove~\eqref{HatCurvLemma},
  we may take formulas established above
  and replace all Clifford variables~$c_i$ by~$\hat c_i$ and vice versa.
  Either by direct computation or by collecting those terms
  in Lemma~\ref{BLWLemma} where~$\tau$ occurs quadratically,
  we obtain
  \begin{equation*}
    \biggl(\frac1{24}\sum_{i,j,k=1}^m\tau_{ijk}\,\hat c_i\hat c_j\hat c_k
	\biggr)^2
    =-\sum_{i=1}^m
	\biggl(\frac18\sum_{j,k=1}^m\tau_{ijk}\hat c_j\hat c_k\biggr)^2
      -\frac1{48}\sum_{i,j,k=1}^m\tau_{ijk}^2\;.
  \end{equation*}
  Using equation~\eqref{CurvatureFormel2} for~$R'$ and proceeding as above,
  we find
  \begin{align*}
    \sum_{i,j,k,l=1}^mR'_{ijkl}\,\hat c_i\hat c_j\hat c_k\hat c_l
    &=2\kappa-\frac14\sum_{i,j,k,l,p=1}^m
	(2\tau_{ijp}\tau_{kpl}+\tau_{jkp}\tau_{ipl}+\tau_{kip}\tau_{jpl})
	\,\hat c_i\hat c_j\hat c_k\hat c_l\\
    &=2\kappa
      +\sum_{i=1}^m\biggl(\sum_{j,k=1}^m\tau_{ijk}\,\hat c_i\hat c_j\biggr)^2
      +\frac32\sum_{i,j,k=1}^m\tau_{ijk}^2\\
    &=2\kappa
      -\biggl(\frac13\sum_{i,j,k=1}^m\tau_{ijk}
	\,\hat c_i\hat c_j\hat c_k\biggr)^2
      +\frac16\sum_{i,j,k=1}^m\tau_{ijk}^2\;,
  \end{align*}
  which proves~\eqref{HatCurvLemma}.
\end{proof}

\section{A Bochner-Lichnerowicz-Weitzenb\"ock Formula}\label{BLWSection}

We now establish Corollary~\ref{HolonomyCor}.
This chapter is not needed in the proof of Theorem~\ref{MainThm}.
We regard the special case~$\bar g=g$.
Then the operator~$\tilde{\bar D}^W$ becomes a modified Euler-
or signature operator~$\tilde D^W$.
In this case,
we can use~\eqref{BLWFormel} and Proposition~\ref{SquareProp}
to simplify the Bochner-Lichnerowicz-Weitzenb\"ock formula
in Lemma~\ref{BLWLemma} as follows.

\begin{Corollary}\label{BLWCor}
  Assume that~$\nabla'$ has parallel and alternating torsion.
  Then the square of the modified signature operator~$\tilde D^W$ is given by
  \begin{multline*}
    \bigl(\tilde D^W\bigr)_p^2
    =(\nabla'\otimes\hat\nabla')^*
	  (\nabla'\otimes\hat\nabla')
	+\biggl(\frac1{12}\sum_{i,j,k=1}^m\tau_{ijk}
          \,\hat c_i\hat c_j\hat c_k\biggr)^2\\
	+\frac1{16}\sum_{i,j,k,l=1}^mR^{\prime TM}_{ijkl}\,
	  (c_ic_j+\hat c_i\hat c_j)\,
	  (c_kc_l+\hat c_k\hat c_l)\;.\quad\qed
  \end{multline*}
\end{Corollary}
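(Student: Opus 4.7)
The plan is to deduce Corollary~\ref{BLWCor} by specializing the general formula~\eqref{BLWFormel} of Section~\ref{EstSect} to the case $\bar g = g$, so that all the scaling factors $\lambda_i$ equal $1$ and $\bar c_i = c_i$, and then invoking Proposition~\ref{SquareProp} to eliminate the two ``pure'' curvature quartics $\sum R'_{ijkl} c_ic_jc_kc_l$ and $\sum R'_{ijkl} \hat c_i\hat c_j\hat c_k\hat c_l$ in favor of scalar and cubic quantities. In the $\bar g = g$ specialization the connection $\bar\nabla'$ of~\eqref{NablaxquerDef} coincides with $\nabla'$, and the operator $\tilde{\bar D}^W$ becomes the modified Hodge--Dirac operator $\tilde D^W$ of the statement.

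Concretely, substituting $\lambda_i = 1$ into~\eqref{BLWFormel} leaves, besides the targeted mixed quartic $\tfrac{1}{16}\sum R'_{ijkl}(c_ic_j + \hat c_i\hat c_j)(c_kc_l + \hat c_k\hat c_l)$ and the Laplacian $(\nabla'\otimes\hat\nabla')^*(\nabla'\otimes\hat\nabla')$, a scalar term $\kappa/4$, the two pure curvature quartics with signs $-1/16$, the $d\tau$-term $+\tfrac{1}{96}\sum (d\tau)_{ijkl} c_ic_jc_kc_l$, and the $\tau^2$-term $-\tfrac{1}{48}\sum \tau_{ijk}^2$. Plugging in Proposition~\ref{SquareProp}~\eqref{AijkComparison} at $\lambda_i = 1$ (where the $(1 - \lambda_i^2\lambda_j^2) R'_{ijji}$ contribution drops out) and the $\lambda$-independent identity Proposition~\ref{SquareProp}~\eqref{HatCurvLemma}, one checks that the scalar-curvature coefficients add to $\tfrac{1}{4} - \tfrac{1}{8} - \tfrac{1}{8} = 0$, the two $(d\tau)_{ijkl} c_ic_jc_kc_l$ contributions cancel with signs $\pm \tfrac{1}{96}$, and the three $\tau^2$ coefficients sum to $\tfrac{1}{32} - \tfrac{1}{96} - \tfrac{1}{48} = \tfrac{3 - 1 - 2}{96} = 0$. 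What remains is exactly the right-hand side of Corollary~\ref{BLWCor}, where the squared Clifford cubic $\bigl(\tfrac{1}{12}\sum \tau_{ijk}\hat c_i\hat c_j\hat c_k\bigr)^2$ comes from the minus sign in front of the last term of~\eqref{HatCurvLemma}.

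There is no real obstacle, since the identity is essentially a specialization plus bookkeeping; the conceptual content has been fully absorbed into Proposition~\ref{SquareProp}. The only subtle point worth flagging is the simultaneous vanishing of the scalar, $d\tau$ and $\tau^2$ residues, which reflects the Riccati-type identity Lemma~\ref{ParTorLemma}~\eqref{RiccatiRel} relating $d\tau$ to $\tau\otimes \tau$; this is precisely why one obtains a clean Bochner formula with a nonnegative Weitzenböck remainder (on $\nabla'$-parallel forms) and why Corollary~\ref{HolonomyCor} follows: if the curvature operator $R'$ is nonnegative on $\Lambda^2TM$, the mixed quartic is a nonnegative endomorphism, and together with the manifestly nonnegative $(\tfrac{1}{12}\sum\tau_{ijk}\hat c_i\hat c_j\hat c_k)^2$ term it forces every $\tilde D^W$-harmonic form to be $\nabla'$-parallel.
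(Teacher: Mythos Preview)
Your proposal is correct and follows exactly the approach indicated in the paper: specialize~\eqref{BLWFormel} to~$\bar g=g$ (so all~$\lambda_i=1$ and~$\bar\nabla'=\nabla'$), then substitute the two identities of Proposition~\ref{SquareProp} for the pure quartics. Your bookkeeping of the cancellations in the $\kappa$-, $d\tau$-, and~$\tau^2$-terms is accurate, and the extra remarks on Corollary~\ref{HolonomyCor} match the paper's subsequent argument.
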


If~$R'$ acts nonnegatively on~$\Lambda^2TM$,
then this formula implies that all $\tilde D^W$-harmonic forms
are $\nabla'$-parallel.
We will now prove Corollary~\ref{HolonomyCor},
which allows us to compute some global topological invariants of~$M$
at a single point.


\begin{proof}[Proof of Corollary~\ref{HolonomyCor}]
  We choose~$(V,\nabla^V)=(SM,\nabla^{\prime SM})$.
  The operator
  \begin{equation*}
    \sum_{i,j,k=1}^m\tau_{ijk}\,\hat c_i\hat c_j\hat c_k
  \end{equation*}
  is selfadjoint,
  and hence has a positive square.

  If~$R'$ acts nonnegatively on~$\Lambda^2TM$,
  it possesses a symmetric square root that we will denote~$B$,
  with coefficients~$B_{ijkl}$.
  More precisely,
  \begin{equation*}
    -R_{ijkl}=\sum_{p,q=1}^mB_{ijpq}\,B_{pqkl}\;.
  \end{equation*}
  Thus,
  \begin{equation}\label{RootFormel}
    \sum_{i,j,k,l=1}^mR^{\prime TM}_{ijkl}\,
      (c_ic_j+\hat c_i\hat c_j)\,
      (c_kc_l+\hat c_k\hat c_l)
    =-\sum_{i,j=1}^m\biggl(\sum_{k,l=1}^MB_{ijkl}\,
      (c_kc_l+\hat c_k\hat c_l)\biggr)^2
    \ge 0
  \end{equation}
  because the square of a skewadjoint operator is negative.

  As all operators on the right hand side in the Corollary
  are nonnegative,
  each $\tilde D^W$-harmonic form lies in the kernel of
  the connection Laplacian, and is thus $\nabla'$-parallel.
  Note that
  \begin{equation*}
    \tilde D=\sum_{i=1}^mc_i(\nabla'\otimes\hat\nabla')_{e_i}
      -\frac1{12}\sum_{i,j,k=1}^m\tau_{ijk}\,c_ic_jc_k\;.
  \end{equation*}
  Clearly,
  the first term on right hand side vanishes on $\nabla'$-parallel forms.
  Because the torsion is $\nabla'$-parallel and $\nabla'$ is
  compatible with Clifford multiplication,
  we see that the second term on the right hand side is $\nabla'$-parallel.
  In particular,
  the operator~$\tilde D$ preserves the space~$\ker\nabla'$
  of $\nabla'$-parallel forms.

  Assuming that~$m$ is even,
  we conclude
  \begin{multline*}
    \chi(M)=\ind(\tilde D)=\ind(\tilde D|_{\ker\nabla'})
    =\dim\bigl(\ker\nabla'\cap\Omega^{\even}(M)\bigr)\\
      -\dim\bigl(\ker\nabla'\cap\Omega^{\odd}(M)\bigr)
    =\dim(\Lambda^\even\pi)^H-\dim(\Lambda^\odd\pi)^H\;,
  \end{multline*}
  because $\nabla'$-parallel forms are determined by their value
  at a single point.
  The signature and the Kervaire semicharacteristic can be computed
  similarly if they are defined.
\end{proof}

\section{Normal Homogeneous Spaces}\label{HomoSect}

We regard homogeneous spaces as a special case of Riemannian manifolds
that admit a connection with nonvanishing parallel and alternating torsion.

Let~$G$ be a Lie group, let~$H\subset G$ be a closed subgroup,
and let~$\frh\subset\frg$ denote their Lie algebras.
In the following,
we consider the homogeneous space~$M=G/H$,
with tangent bundle~$TM\cong G\times_H(\frg/\frh)$.
Let~$g$ be a metric on~$M$,
then~$g$ is invariant under the natural action of~$G$
iff~$g$ is induced by an $\Ad_H$-invariant metric~$g_{\frg/\frh}$
on~$\frg/\frh$.
In this case,
we call~$(M,g)$ a Riemannian homogeneous space.

\begin{Definition}
  A Riemannian homogeneous space~$(G/H,g)$ is called
  {\em naturally reductive\/}
  if there exists an $\Ad_H$-invariant linear complement~$\frp$ of~$\frh$
  in~$\frg$ with an $\Ad_H$-invariant metric~$g_\frp$ inducing
  the Riemannian metric~$g$ on~$T(G/H)\cong G\times_H\frp$,
  such that the three-form~$g_\frp([\punkt,\punkt]_\frp,\punkt)$ on~$\frp$
  is fully alternating.

  A naturally reductive Riemannian homogeneous space~$(G/H,g)$
  is called {\em normal\/} if there exists an $\Ad_G$-invariant
  metric~$g_\frg$ on~$\frg$ such that~$\frp=\frh^\perp$
  and~$g_\frp=g_\frg|_\frp$.
 \end{Definition}

Let~$(M,g)=(G/H,g)$ be naturally reductive,
then we have the isotropy representation
\begin{equation*}
  \pi=\Ad|_{H\times\frp}\colon H\to\SO(\frp)\;.
\end{equation*}
We identify~$\frg$ with the left invariant vector fields on~$G$,
then the action of~$H$ on~$G$ is generated by the left invariant vector fields
corresponding to elements of~$\frh\subset\frg$.
We represent a vector field~$X$ on~$M$ by its horizontal lift~$\hat X$ to~$G$.
Then~$\hat X$ is a map~$\hat X\colon G\to\frp\subset\frg$
that is $H$-equivariant with respect to the isotropy representation.
Then the {\em reductive connection\/}~$\nabla'$ and
the Levi-Civita connection~$\nabla^{TM}$ are given by
\begin{equation*}
  \widehat{\nabla^{\prime TM}_XY}=\hat X(\hat Y)
  \qquad\text{and}\qquad
  \widehat{\nabla^{TM}_XY}=\hat X(\hat Y)+\frac12\,[\hat X,\hat Y]_\frp\;,
\end{equation*}
where~$[\hat X,\hat Y]_\frp$ is the pointwise algebraic Lie bracket
projected to~$\frp$,
see the Cheeger and Ebin~\cite{CE}.
These connections are different unless~$(M,g)$ is locally symmetric.
Let us recall some well-known facts about normal homogeneous metrics.

\begin{Lemma}\label{RedLemma}
  Let~$M=G/H$ be a homogeneous space with a normal metric~$g$.
  Then the reductive connection~$\nabla'$ has parallel and alternating
  torsion,
  and its curvature operator~$R'$ on~$\Lambda^2TM$ is nonnegative.
\end{Lemma}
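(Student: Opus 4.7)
The plan is to reduce everything to computations in the Lie algebra at the origin $o = eH \in G/H$ and then invoke $G$-equivariance. Using the horizontal-lift formalism $\widehat{\nabla'_X Y} = \hat X(\hat Y)$ recalled just before the statement, I would take $X, Y \in \frp$ to be fundamental vector fields whose horizontal lifts $\hat X, \hat Y$ are the constant maps $G \to \frp$. At $o$ we then have $\hat X(\hat Y) = 0 = \hat Y(\hat X)$, and the Lie bracket of vector fields on $M$ corresponds to $[\hat X, \hat Y]_\frp$ at $o$, yielding the standard formula $T(X,Y)|_o = -[X,Y]_\frp$ for the torsion.

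Parallel transport for $\nabla'$ along curves of the form $g(t)\cdot o$ is given by the differential of left multiplication by $g(t)$, so every $G$-invariant tensor on $T(G/H)$ is $\nabla'$-parallel. Since the reductive decomposition $\frg = \frh \oplus \frp$ is $\Ad_H$-invariant, $T$ extends to a $G$-invariant tensor on $TM$, so $\nabla' T = 0$. For the alternating property, normality supplies an $\Ad_G$-invariant inner product $g_\frg$ on $\frg$ with $\frp = \frh^\perp$ and $g_\frp = g_\frg|_\frp$; bi-invariance says $\ad$ is skew-adjoint, equivalently $g_\frg([A,B],C) = g_\frg(A,[B,C])$. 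Hence for $X,Y,Z \in \frp$,
\[
  \langle T(X,Y), Z\rangle_\frp
  = -\langle [X,Y]_\frp, Z\rangle_\frg
  = -\langle [X,Y], Z\rangle_\frg,
\]
using $\frh \perp \frp$ in the last step. The right-hand side is totally antisymmetric in $X,Y,Z$ by the "associativity" identity $\langle[X,Y],Z\rangle = \langle X,[Y,Z]\rangle$ combined with the skew-symmetry of the bracket, so $\tau \in \Lambda^3 \frp^*$.

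For the curvature, a parallel horizontal-lift calculation at $o$ gives the classical formula $R'_{X,Y}Z = -[[X,Y]_\frh, Z]$ for $X,Y,Z \in \frp$. Then using the sign convention of Section~\ref{PartorSect} and the associativity identity again,
\[
  \langle R'(X\wedge Y),\, Z\wedge W\rangle
  = -\langle R'_{X,Y}Z, W\rangle
  = \langle [X,Y]_\frh,\, [Z,W]\rangle_\frg
  = \langle [X,Y]_\frh,\, [Z,W]_\frh\rangle_\frg,
\]
where in the last equality we used $[X,Y]_\frh \in \frh \perp \frp$. This exhibits $R'$ on $\Lambda^2\frp$ as the Gram form of the linear map $\Lambda^2\frp \to \frh$, $X\wedge Y \mapsto [X,Y]_\frh$, so $\langle R'\omega,\omega\rangle \geq 0$ on all of $\Lambda^2 \frp$, with equality iff $\omega$ lies in the kernel of this map. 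By $G$-invariance of $R'$, nonnegativity propagates to every point of $M$.

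There is no serious obstacle here: once the horizontal-lift formulas for $T$ and $R'$ are in place, everything reduces to linear algebra with a bi-invariant inner product. The only point requiring care is keeping track of signs and of the two distinct conventions — the sign in the definition of the curvature operator on $\Lambda^2TM$ from Section~\ref{PartorSect}, and the sign in $R'_{X,Y}Z = -[[X,Y]_\frh, Z]$ — so that these combine to give a genuinely \emph{nonnegative} (not nonpositive) quadratic form. The alternating property of $\tau$ is really the definition of natural reductivity, so the substantive content of the lemma is that normality implies natural reductivity and that the image of the bracket map $\Lambda^2 \frp \to \frh$ controls the curvature operator.
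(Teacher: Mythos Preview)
Your argument is correct and follows essentially the same route as the paper: compute the torsion and curvature via horizontal lifts to obtain $T(X,Y)=-[X,Y]_\frp$ and $R'_{X,Y}Z=-[[X,Y]_\frh,Z]$, use bi-invariance of $g_\frg$ to get the alternating property and to rewrite $\langle R'(X\wedge Y),Z\wedge W\rangle=\langle[X,Y]_\frh,[Z,W]_\frh\rangle$ as a Gram form. The one cosmetic difference is that for $\nabla'T=0$ the paper writes down the Leibniz identity $\hat X([\hat Y,\hat Z]_\frp)=[\hat X(\hat Y),\hat Z]_\frp+[\hat Y,\hat X(\hat Z)]_\frp$ directly, whereas you invoke the equivalent fact that $G$-invariant tensors are $\nabla'$-parallel; both are standard and amount to the same thing.
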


\begin{proof}
  One easily checks that the torsion tensor~$T$ of~$\nabla^{\prime TM}$
  satisfies
  \begin{equation*}
    \widehat{T(X,Y)}=-[\hat X,\hat Y]_\frp\;.
  \end{equation*}
  In particular, $\nabla^{\prime TM}$ has alternating torsion
  by natural reductivity.
  The torsion is parallel because
  \begin{equation*}
    \hat X([\hat Y,\hat Z]_\frp)
    =[\hat X(\hat Y),\hat Z]_\frp+[\hat Y,\hat X(\hat Z)]_\frp\;.
  \end{equation*}

  The reductive connection has curvature
  \begin{equation*}
    \widehat{R'_{X,Y}Z}=-\bigl[[\hat X,\hat Y]_\frh,\hat Z\bigr]_\frp\;.
  \end{equation*}
  On a normal homogeneous space,
  we may write
  \begin{equation}\label{HomoCurvature}
    \<R'_{X,Y}Z,W\>=-\<[\hat X,\hat Y]_\frh,[\hat Z,\hat W]_\frh\>\;,
  \end{equation}
  in particular,
  the action of~$R'$ on~$\Lambda^2TM$ is non-negative.
  This is not true in general for naturally reductive spaces.  
\end{proof}

For the rigidity statements in Theorems~\ref{MainThm} and~\ref{MapThm},
we need to control the Ricci curvature.

\begin{Lemma}\label{RicciLemma}
  Let~$M=G/H$ and~$g$ be as before.
  Then the Ricci curvature~$\rho$ is positive definite unless~$G/H$
  contains a Euclidean local de Rham factor,
  which implies in particular that~$\rk G>\rk H$.
  Moreover, the form~$2\rho-\kappa\,g$ is negative definite
  unless~$M$ is covered by~$\R^m$ or~$S^2\times\R^{m-2}$.
\end{Lemma}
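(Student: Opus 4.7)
The plan is to read the two statements off an explicit formula for the sectional curvature on a normal homogeneous space, and then interpret the resulting algebraic conditions. Combining the reductive curvature formula~\eqref{HomoCurvature} with Lemma~\ref{ParTorLemma}~\eqref{SectRel} gives, for an orthonormal basis $e_1,\ldots,e_m$ of $\frp\cong T_{eH}M$,
\[
R_{ijji}=\|[e_i,e_j]_\frh\|^2+\tfrac14\|[e_i,e_j]_\frp\|^2\ge 0,
\]
with equality iff $[e_i,e_j]=0$ in $\frg$. Summing as in Remark~\ref{RigidityRem}, for unit $U=e_1$,
\[
\rho(U,U)=\sum_i\bigl(\|[U,e_i]_\frh\|^2+\tfrac14\|[U,e_i]_\frp\|^2\bigr),\qquad
(\kappa g-2\rho)(U,U)=2\!\!\sum_{1<i<j}\!\!R_{ijji}.
\]
Thus $\rho(U,U)=0$ iff $U$ lies in $\frz_0:=\{X\in\frp:[X,\frp]=0\}$, and $(\kappa g-2\rho)(U,U)=0$ iff $U^\perp\cap\frp$ is an abelian subspace of $\frg$.

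For Ricci positivity I take a nonzero $U\in\frz_0$ and produce a Levi--Civita parallel vector field. After dividing out the ineffective kernel (the maximal ideal of $\frg$ in $\frh$) I may assume $G$ acts effectively. Skew-adjointness of $\ad U$ combined with $[U,\frp]=0$ forces $\ad U(\frh)\subset\frh$, so the ideal of $\frg$ generated by $U$ lies in $\R U\oplus\frh$; its $\frh$-part is a $\frg$-ideal inside $\frh$ and hence vanishes by effectiveness. Consequently $U\in\frz(\frg)$, $U$ is $\Ad_H$-invariant, and the $G$-invariant extension $\tilde U$ has constant horizontal lift $\hat{\tilde U}\equiv U$. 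The reductive formula $\widehat{\nabla_X\tilde U}=\hat X(\hat{\tilde U})+\tfrac12[\hat X,\hat{\tilde U}]_\frp$ then yields $\nabla\tilde U=0$, and de Rham's theorem supplies a Euclidean local factor in the universal cover. Since $\frz(\frg)$ is contained in every maximal torus of $\frg$, extending a maximal torus of $\frh$ across $U$ strictly increases the rank, giving $\rk G>\rk H$.

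For the negativity of $2\rho-\kappa g$ I start with a null direction $U$, so $U^\perp\cap\frp$ is abelian in $\frg$. If $\frp$ itself is abelian then every $R_{ijji}$ vanishes, $M$ is flat, and the universal cover is $\R^m$. Otherwise I pass to the de Rham decomposition $\tilde M=\R^k\times M_1\times\cdots\times M_r$ of the universal cover, with each $M_\alpha$ irreducible non-flat. The expansion
\[
(\kappa g-2\rho)(U,U)=\sum_\alpha(\kappa_\alpha g_\alpha-2\rho_\alpha)(U_\alpha,U_\alpha)+\sum_{\alpha\ne\beta}\kappa_\alpha\,|U_\beta|^2,
\]
combined with $\kappa_\alpha>0$ on each non-flat factor (which inherits the curvature formula above, so $\kappa_\alpha g_\alpha-2\rho_\alpha\ge 0$), forces any null $U$ to be concentrated on a single non-flat factor $M_\alpha$ and to be null already there. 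On such a factor the existence of a hyperplane abelian subspace in $\frp_\alpha$ bounds $\dim M_\alpha\le\rk\frg_\alpha+1$, and a small-rank case check leaves only $M_\alpha=S^2$; hence $\tilde M=\R^{m-2}\times S^2$.

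The main obstacle is this last step: ruling out irreducible non-flat normal homogeneous factors of dimension $\ge 3$ that carry an abelian hyperplane in $\frp$. The rank bound $\dim M_\alpha\le\rk\frg_\alpha+1$ reduces the question to finitely many pairs $(\frg_\alpha,\frh_\alpha)$, but the verification still requires using the non-abelian structure of $\frp_\alpha$ carefully---for instance, by comparing the bound with the standard lower bound on $\dim M_\alpha$ coming from the isotropy representation---to exclude each remaining case.
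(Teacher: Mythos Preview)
Your argument for Ricci positivity is essentially the paper's: same sectional-curvature formula, same conclusion that a Ricci-null direction is central in~$\frg$ and hence spans a parallel line field. The additional care about effectiveness is fine but not strictly needed.

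For the negativity of~$2\rho-\kappa g$, however, you have missed a decisive simplification and in consequence left yourself with the gap you acknowledge. The paper does not analyse abelian hyperplanes in~$\frp$ at all. Instead it splits on whether the torsion~$T$ of the reductive connection vanishes. If~$T\ne 0$, Remark~\ref{RigidityRem} applies verbatim: from
\[
(\kappa g-2\rho)(U,U)=\sum_{i,j\ge 2}R_{ijji}
=\sum_{i,j\ge 2}\Bigl(R'_{ijji}+\tfrac14\sum_k\tau_{ijk}^2\Bigr)
\]
together with~$R'\ge 0$ one gets strict positivity for every unit~$U$, since some~$\tau_{ijk}$ with~$i,j\ge 2$ is nonzero. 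If~$T=0$, then~$[\frp,\frp]_\frp=0$, i.e.\ $[\frp,\frp]\subset\frh$, so~$M$ is locally symmetric; the universal cover is a product of~$\R^k$ with irreducible symmetric spaces of compact type, each of which is Einstein. The condition~$(\kappa g-2\rho)(U,U)\le 0$ then becomes the elementary eigenvalue inequality~$\rho_i\ge\sum_{j\ne i}\rho_j$, which among such products forces~$\tilde M=\R^m$ or~$S^2\times\R^{m-2}$.

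Your direct route via the bound~$\dim M_\alpha\le\rk\frg_\alpha+1$ has two weaknesses. First, it presupposes that each irreducible de~Rham factor is again normal homogeneous with its own pair~$(\frg_\alpha,\frh_\alpha)$ so that the Lie-algebraic curvature formula and the notion of~$\frp_\alpha$ persist; you have not justified this. Second, even granting that, your ``small-rank case check'' is precisely the heart of the matter and you have not carried it out. The torsion dichotomy sidesteps both issues: for~$T\ne 0$ no classification whatsoever is needed, and for~$T=0$ the Einstein property of irreducible symmetric factors replaces your rank bound by a one-line comparison of Ricci eigenvalues.
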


\begin{proof}
  The Riemannian curvature tensor of~$(M,g)$ satisfies
  \begin{equation*}
    \<R_{X,Y}Y,X\>
    =\bigl\|[\hat X,\hat Y]_\frh\bigr\|^2
    +\frac14\,\bigl\|[\hat X,\hat Y]_\frp\bigr\|^2
    \ge 0\;.
  \end{equation*}
  This implies that the Ricci tensor is nonnegative.

  Assume that~$X(p)\in\ker\rho$ is a unit vector,
  and extend~$\hat X(p)$ to a basis~$e_1=\hat X(p)$, $e_2$, \dots, $e_m$
  of~$\frp$.
  Because
  \begin{equation*}
    0=\rho(X,X)(p)
    =\sum_{i=2}^m\biggl(\norm{[e_1,e_i]_\frh}^2
      +\frac14\,\norm{[e_1,e_i]_\frp}^2\biggr)
  \end{equation*}
  and $\<R_{X,Y}Y,X\>=0$ implies~$[\hat X,\hat Y]_\frg=0$,
  we conclude that~$[e_1,w]_\frg=0$ for all~$w\in\frp$.
  For~$v\in\frh$,
  we have~$[e_1,v]\in\frp$ and
  \begin{equation*}
    \<[e_1,v],w\>=-\<[e_1,w],0\>=0
  \end{equation*}
  for all~$w\in\frp$.
  Hence~$e_1$ lies in the centrum~$\frz$ of~$\frg$.
  But then the vector field~$X_0$ with~$\hat X_0\equiv e_1$
  is a parallel vector field that spans a one-dimensional
  Euclidean local de Rham factor of~$M$.
  If~$G$ and~$H$ are of equal rank,
  then~$\frz\subset\frh$,
  so~$\frz\cap\frp\ne0$ implies~$\rk G>\rk H$.

  By Remark~\ref{RigidityRem},
  we conclude that~$2\rho-\kappa\,g$ is negative definite
  unless~$T=0$.
  Let us assume~$T=0$,
  then~$M=G/H$ is symmetric,
  and the universal covering~$\tilde M$ of~$M$ splits as a product
  of irreducible symmetric spaces of compact type and~$\R^k$ for some~$k\ge0$.
  Let~$\rho_1$, \dots, $\rho_m$ denote the eigenvalues of the Ricci tensor.
  As explained in~\cite{GS},
  the form~$(2\rho-\kappa)$ is negative definite unless
  \begin{equation*}
    \rho_i\ge\sum_{j\ne i}\rho_j
  \end{equation*}
  for some index~$i\in\{1,\dots,m\}$.
  But since every irreducible symmetric space is Einstein,
  this is only possible if~$\tilde M=\R^m$ or~$\tilde M=S^2\times\R^{m-2}$.
\end{proof}

\begin{proof}[Proof of Corollary~\ref{HomoThm}]
  It remains to control the index of the Hodge-Dirac operator
  on suitable Dirac subbundles of~$\Lambda^\bullet T^*M$
  if~$\rk G=\rk H$.
  One has the well-known formula
  \begin{equation}
    \chi(G/H)=\#(W_G/W_H)>0\;,
  \end{equation}
  where~$W_G$ and~$W_H$ are the Weyl groups of~$G$ and~$H$
  with respect to a common maximal torus.
  In particular,
  the Euler operator itself has nonvanishing index,
  see Example~\ref{IndexExample}~\eqref{EulerCond}.
\end{proof}

If~$\rk G-\rk H=1$ and~$\dim M\equiv 1$ mod~$4$,
the situation is more complicated.
There are example like~$S^{4k+1}$ where the Kervaire semicharacteristic
does not vanish.
In Example~\ref{OddIndexExample},
we have seen that for product spaces like~$M=G/H\times S^{4k+2}$,
there is no suitable $\nabla'$-parallel Dirac
subbundle~$W\subset\Lambda^\bullet T^*M$ giving rise to a nontrivial index.

Corollary~\ref{HolonomyCor} suggests to treat the case~$\rk G-\rk H=1$
and~$\dim M=4k+1$ by representation theoretic methods.
First note that if~$H$ is connected and~$\frh$ contains no central
elements of~$\frg$,
then~$H$ is the global holonomy group of the reductive connection on~$TM$,
in accordance with our notation in Corollary~\ref{HolonomyCor}.

We regard the infinitesimal isotropy representation of~$M$
as a Lie algebra homomorphism~$\pi_*\colon\frh\to\spin(m)$.
Pulling back the complex spinor representation of~$\spin(m)$ to~$\frh$
gives rise to the spinor representation~$\tilde\pi_*$ of~$M$.
Let~$\hat{\tilde\pi}_*$ be another copy of~$\tilde\pi_*$.
By~\eqref{LambdaFormel},
the complex exterior representation~$\Lambda^\bullet\pi_*\otimes_\R\C$
is isomorphic
to the sum of one or two copies of~$\tilde\pi_*\otimes\hat{\tilde\pi}_*$.
If~$M$ is $G$-equivariantly spin,
this integrates to an isomorphism of $H$-representations.

Let~$S\subset T$ be maximal tori of~$H$ and~$G$, respectively.
Let~$\frs\subset\frt$ denote their Lie algebras.
Because~$g$ is a normal metric,
we may identify the dual~$\frs^*$ with a subspace of~$\frt^*$.
Let~$\rho_H\in\frs^*$ and~$\rho_G\in\frt^*$ denote the half sums
of the positive (real) roots of~$H$ and~$G$ with respect to some Weyl chamber.
By abuse of notation,
we identify each irreducible representation of~$H$ or~$G$
with its heighest (real) weight in~$\frs^*$ or~$\frt^*$, respectively.

\begin{Remark}\label{HomoBLWExp}
  Let us regard the Bochner-Lichnerowicz-Weitzenb\"ock formula
  in Corollary~\ref{BLWCor}.
  The group~$G$ acts on~$\Lambda^\bullet T^*M$,
  and the restriction of the infinitesimal action to~$\frh$ is isomorphic
  to~$\tilde\pi_*\otimes\hat{\tilde\pi}_*$.
  Using~\eqref{HomoCurvature}, we can identify the term
  \begin{equation*}
    (\nabla'\otimes\hat\nabla')^*
	  (\nabla'\otimes\hat\nabla')
	+\frac1{16}\sum_{i,j,k,l=1}^mR^{\prime TM}_{ijkl}\,
	  (c_ic_j+\hat c_i\hat c_j)\,
	  (c_kc_l+\hat c_k\hat c_l)
  \end{equation*}
  with the action of the Casimir operator~$c_G$.
  By a computation in~\cite{G1}, \cite{G2},
  we have
  \begin{equation*}
    \biggl(\frac1{12}\sum_{i,j,k=1}^m\tau_{ijk}
      \,\hat c_i\hat c_j\hat c_k\biggr)^2
    =\norm{\rho_G}^2-\norm{\rho_H}^2-c_H^{\hat{\tilde\pi}}\;,
  \end{equation*}
  where~$c_H^{\hat{\tilde\pi}}$ denotes the Casimir operator of~$H$.
  If~$\kappa$ is an irreducible component of~$\hat{\tilde\pi}_*$
  acting on~$V^\kappa$,
  let~$V=G\times_HV^\kappa\subset SM$,
  and consider the Dirac subbundle~$W\subset\Lambda^\bullet T^*M\otimes_\R\C$.
  If~$\gamma$ is an irreducible representation of~$G$,
  then~$(\tilde D^W)^2$ acts on the $\gamma$-isotypical component
  of~$\Gamma(W)$ by
  \begin{equation*}
    c_G+\norm{\rho_G}^2-\norm{\rho_H}^2-c_H^\kappa
    =\norm{\gamma+\rho_G}^2-\norm{\kappa+\rho_H}^2\;.
  \end{equation*}
  This is the well-known generalisation of Parthasarathy's formula~\cite{Parth}
  to homogeneous spaces.
\end{Remark}

\begin{Lemma}\label{HomoIndLemma}
  Let~$M=G/H$ be a quotient of compact Lie groups.
  Then~$\Lambda^\bullet TM$ contains a Dirac subbundle~$W$
  with~$\ker\tilde D^W\ne 0$
  if and only if there exists~$w\in W_G$ such that~$w(\rho_G)\in\frs^*$.
  The index of~$\tilde D^W$ vanishes if~$\rk G-\rk H>1$.
\end{Lemma}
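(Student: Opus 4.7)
The plan is to translate the analytic condition $\ker\tilde D^W\ne0$ into a Weyl-theoretic statement via the Parthasarathy identity of Remark~\ref{HomoBLWExp}, and then pin it down by exploiting the strict positivity of~$\rho_G$.

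First, by $G$-equivariance of~$\tilde D^W$ and Peter--Weyl, the kernel decomposes into isotypical components indexed by irreducible $G$-representations~$V^\gamma$. Applying Frobenius reciprocity to the $H$-fibre~$\tilde\pi\otimes V^\kappa$ of $W=SM\otimes V$, the $\gamma$-summand is nonzero if and only if $V^\gamma|_H$ shares an irreducible summand with $\tilde\pi\otimes V^\kappa$; Remark~\ref{HomoBLWExp} adds the Parthasarathy equality $\|\gamma+\rho_G\|^2=\|\kappa+\rho_H\|^2$. I would invoke Kostant's branching formula for the cubic Dirac operator in two ways: applied to the trivial $G$-module it gives $\tilde\pi=\bigoplus_{w'\in W^1}V^{w'(\rho_G)|_\frs-\rho_H}$, so the hypothesis $V^\kappa\subset\hat{\tilde\pi}$ forces $\kappa+\rho_H=w'(\rho_G)|_\frs$ for some $w'\in W^1$, and applied to $V^\gamma$ it rewrites the Frobenius condition as $\kappa+\rho_H=w(\gamma+\rho_G)|_\frs$ for some $w\in W^1$.

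Equating the two descriptions of $\kappa+\rho_H$ and using the orthogonal decomposition $\frt^*=\frs^*\oplus(\frs^\perp)^*$ together with Weyl-invariance of the inner product, I obtain
\[
\|\gamma+\rho_G\|^2\ge\|w(\gamma+\rho_G)|_\frs\|^2=\|w'(\rho_G)|_\frs\|^2\le\|w'(\rho_G)\|^2=\|\rho_G\|^2,
\]
and the Parthasarathy equality makes the left inequality an equality. Since $\rho_G$ is strictly interior to the positive Weyl chamber, $\|\gamma+\rho_G\|^2\ge\|\rho_G\|^2$ for any dominant $\gamma$, with equality only when $\gamma=0$. Thus $\gamma=0$ is forced, both inequalities collapse to equalities, and $w'(\rho_G)|_{\frs^\perp}=0$, i.e.\ $w'(\rho_G)\in\frs^*$. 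The converse is direct: given $w'(\rho_G)\in\frs^*$, choose $w'$ within its $W_H$-coset so that $\kappa:=w'(\rho_G)-\rho_H$ is $H$-dominant; then $V^\kappa\subset\hat{\tilde\pi}$ by Kostant, and $\gamma=0$ satisfies both conditions, giving $\ker\tilde D^W\ne0$ for $W=SM\otimes(G\times_H V^\kappa)$.

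For the vanishing of the index when $\rk G-\rk H>1$, I would exploit the graded version of Kostant's formula,
\[
V^\gamma\otimes(\tilde\pi^+-\tilde\pi^-)=\sum_{w\in W^1}(-1)^{\ell(w)}V^{w(\gamma+\rho_G)|_\frs-\rho_H}.
\]
By the analysis above, the only contributions to $\ind\tilde D^W$ come from $\gamma=0$ and from Weyl elements $w$ with $w(\rho_G)\in\frs^*$. When $\rk G-\rk H\ge2$, the subspace $\frs^*\subset\frt^*$ has codimension at least two, so its pointwise stabiliser in $W_G$ contains a nontrivial reflection~$s$; pairing each contributing~$w$ with~$sw$ produces cancelling terms of opposite sign in the alternating sum, forcing the index to vanish. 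The main obstacle I anticipate is the non-equal-rank form of Kostant's formula, where the indexing set $W^1$ and multiplicities need care to set up this pairing cleanly.
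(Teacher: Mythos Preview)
Your argument for the equivalence follows the same strategy as the paper: both use the Parthasarathy identity of Remark~\ref{HomoBLWExp} to reduce to $\|\gamma+\rho_G\|^2=\|\kappa+\rho_H\|^2\le\|\rho_G\|^2$, which forces $\gamma=0$ and then $w'(\rho_G)\in\frs^*$. The paper simply cites Lemma~4.4 of~\cite{G1} for the bound $\|\kappa+\rho_H\|\le\|\rho_G\|$ and its equality case, whereas you derive it through Kostant's branching formula; your route is more self-contained but, as you yourself flag, the non-equal-rank version needs care with the coset set~$W^1$ and with whether the summands $V^{w'(\rho_G)|_\frs-\rho_H}$ really give $\tilde\pi$ rather than only the virtual difference $\tilde\pi^+-\tilde\pi^-$.

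The index-vanishing argument, however, has a genuine gap. You assert that when $\frs^*$ has codimension at least two its pointwise stabiliser in~$W_G$ contains a nontrivial reflection~$s$, and propose to pair each contributing~$w$ with~$sw$. But the hypothesis you are working under---that some $w'(\rho_G)$ lies in~$\frs^*$---kills this: $\rho_G$ is regular, so $w'(\rho_G)$ has trivial stabiliser in~$W_G$, and any element fixing~$\frs^*$ pointwise must in particular fix $w'(\rho_G)$, hence equal the identity. Concretely, for $G=\SU(4)$ and $H=\U(1)$ embedded along $(3,1,-1,-3)$ one has $\rho_G=\tfrac12(3,1,-1,-3)\in\frs^*$, yet no root $e_i-e_j$ is orthogonal to~$\frs$, so no reflection fixes~$\frs^*$ pointwise. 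Worse, your own norm computation shows that \emph{every} contributing~$w$ already satisfies $w(\rho_G)\in\frs^*$ (since $\|(w\rho_G)|_\frs\|=\|\kappa+\rho_H\|=\|\rho_G\|=\|w\rho_G\|$), so $sw(\rho_G)=w(\rho_G)$ would force $sw=w$ by regularity: the pairing collapses. The paper avoids this entirely; it identifies $\dim_\C\ker\tilde D^W$ with the multiplicity of~$\kappa$ in~$\tilde\pi$ and invokes Lemma~4.4 of~\cite{G1} to conclude that this multiplicity is even when $\rk G-\rk H\ge 2$, splitting equally between $\tilde\pi^+$ and~$\tilde\pi^-$ when $\dim M$ is even.
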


\begin{proof}
  By Lemma~4.4 in~\cite{G1}, for each irreducible subrepresentation~$\kappa$
  of~$\tilde\pi$ on~$S$, we have~$\norm{\kappa+\rho_H}\le\norm{\rho_G}$.
  Let~$W$ be the associated Dirac subbundle of~$\Lambda^\bullet T^*M$
  as in Remark~\ref{HomoBLWExp}.
  Then on the $\gamma$-isotypical component,
  the operator~$(\tilde D^W)^2$ acts by the scalar
  \begin{equation}\label{KappaNormFormel}
    \norm{\gamma+\rho_G}^2-\norm{\kappa+\rho_H}^2
    \ge\norm{\gamma+\rho_G}-\norm{\rho_G}^2
    =c_G^\gamma\ge 0\;.
  \end{equation}
  Equality can only occur if~$\gamma$ is the trivial representation
  and if~$\norm{\kappa+\rho_H}=\norm{\rho_G}$.
  By~\cite{G1},
  the equation~$\norm{\kappa+\rho_H}=\norm{\rho_G}$ holds for
  a subrepresentation~$\kappa$ of~$\tilde\pi$
  iff~$w\rho_G\in\frs^*$ and~$\kappa=w\rho_G-\rho_H$
  for some element~$w$ in the Weyl group of~$G$.
  This is always the case if~$\frs=\frt$,
  i.e., if~$\rk G=\rk H$.

  Now assume that there exists~$\kappa\subset\tilde\pi$
  such that~$\ker(\tilde D^\kappa)\ne0$.
  By~\eqref{KappaNormFormel},
  one easily concludes that~$\dim_\C\ker(\kappa)$ equals the multiplicity
  of~$\kappa$ in~$\tilde\pi$.
  By Lemma~4.4 in~\cite{G1},
  this multiplicity is~$1$ if~$\rk G\le\rk H+1$
  and no nonzero weight of~$\ad_\frg$ vanishes on~$\frs$.

  On the other hand,
  if~$\rk G\ge\rk H+2$,
  then the multiplicity is even,
  and if~$\dim M$ is even,
  the multiplicity of~$\kappa$ in~$\tilde\pi^+$ and~$\tilde\pi^-$ is the same.
  In this case,
  the index of~$\tilde D^\kappa$ is zero.
\end{proof}

Unfortunately, Lemma~\ref{HomoIndLemma} only states a necessary condition.
There are homogeneous spaces~$G/H$ where~$w\rho_G\notin\frs$
for all~$w\in W_G$.
The Berger space~$\SO(5)/\SO(3)$ considered in~\cite{G1}, \cite{G2}
is an example.

Assume that~$\rk G-\rk H=1$, $\dim M=4k+1$, $w\rho_G\in\frs^*$
and the multiplicity of~$\kappa=w\rho_G-\rho_H$ in~$\tilde\pi_*$
is one.
The example~$M=S^2\times S^3$ shows
that~$\tilde D^W$ can still have vanishing index.
A careful investigation of the bundles of real differential forms
and real spinors using the classification in~\cite{LM} shows
that~$\kappa$ must be of quaternionic type if~$k$ is odd,
and of real type if~$k$ is even.
In the cases of Example~\ref{OddIndexExample},
the corresponding representations are of complex type.

\section{Proof of the Scalar Curvature Estimate}\label{ProofSect}
We prove Theorem~\ref{MainThm} for even-dimensional~$M$.

As in Corollary~\ref{BLWCor} above,
we insert the formulas of Proposition~\ref{SquareProp}
into~\eqref{BLWFormel}.
Using~\eqref{RootFormel}, we obtain
\begin{align}\label{DquerBLWFormel}
  \begin{split}
    \bigl(\tilde{\bar D}^W\bigr)_p^2
    &=(\bar\nabla'\otimes\hat\nabla')^*
	(\bar\nabla'\otimes\hat\nabla')+\frac{\bar\kappa-\kappa}4
      +\biggl(\frac1{12}\sum_{i,j,k=1}^m\tau_{ijk}
	\,\hat c_i\hat c_j\hat c_k\biggr)^2\\
    &\qquad
      -\frac1{16}\sum_{i,j=1}^m\biggl(\sum_{k,l=1}^mB_{ijkl}\,
	(\lambda_k\lambda_l\,c_kc_l+\hat c_k\hat c_l)\biggr)^2\\
    &\qquad
      +\frac18\sum_{i,j=1}^m(1-\lambda_i^2\lambda_j^2)\,R'_{ijji}
      +\frac1{48}\sum_{i,j,k=1}^m(1-\lambda_i^2\lambda_j^2\lambda_k^2)
	\,\tau_{ijk}^2\;.
  \end{split}
\end{align}
Because~$\bar g\ge g$ on~$\Lambda^2TM$ by assumption,
we have
\begin{equation}\label{LambdaProduct}
  \lambda_i\lambda_j\le 1
  \qquad\text{for all~$i$, $j\in\{1,\dots,m\}$ with~$i\ne j$.}
\end{equation}
Because we also assumed~$R'_{ijji}\ge 0$,
the last two terms above are nonnegative.
Thus finally,
\begin{equation}\label{BLWEstimate}
  \bigl(\tilde{\bar D}^W\bigr)_p^2
  \ge\frac{\bar\kappa}4-\frac\kappa4\;.
\end{equation}

By our assumptions in Theorem~\ref{MainThm}
the operator~$\tilde{\bar D}^W$ has a nontrivial kernel.
We may thus apply~\eqref{DquerBLWFormel} to a $\bar D^\kappa$-harmonic
spinor~$\sigma\ne 0$,
obtaining
\begin{align*}
  0
  \ge\int_M\biggl(\frac{\bar\kappa}4-\frac\kappa4\biggr)
    \,\norm{\sigma}^2\,d\vol_{\bar g}\;.
\end{align*}
Note that a nontrivial harmonic spinor on a connected manifold
is nonzero almost everywhere by~\cite{Baer}.
Hence if we assume that~$\bar\kappa\ge\kappa$ everywhere,
we immediately get~$\bar\kappa=\kappa$ almost everywhere,
and hence everywhere by continuity.
This completes the proof of the first part of Theorem~\ref{MainThm}.

We will now prove the second part of Theorem~\ref{MainThm}.
Note that since the torsion~$T$ is assumed to be parallel and~$M$ is connected,
either~$T=0$ or~$T$ vanishes nowhere.
In the case~$T=0$,
the hypotheses in Theorem~\ref{MainThm} are the same as in~\cite{GS}.
Hence in this case, nothing is left to prove.

Thus, we may assume that~$T\ne 0$ everywhere on~$M$.
At a point~$p\in M$,
choose an adapted $g$-orthonormal frame~$e_1$, \dots, $e_m$
as in Section~\ref{EstSect}.
Then there exist indices~$1\le p$, $q$, $r\le m$
such that~$\tau_{pqr}\ne 0$.
We may assume that~$p=1$, $q=2$, $r=3$.

Now we assume that~$\bar\kappa\ge\kappa$.
This implies that we have equality in~\eqref{BLWEstimate}
In particular, the nonnegative term
\begin{equation*}
  \frac1{48}\sum_{i,j,k=1}^m\bigl(1-\lambda_i^2\lambda_j^2\lambda_k^2\bigr)
    \,\tau_{ijk}^2
\end{equation*}
in~\eqref{DquerBLWFormel} vanishes.
For the indices~$1$, $2$, $3$, this implies
\begin{equation*}
  1=\lambda_1^2\lambda_2^2\lambda_3^2
   =(\lambda_1\lambda_2)\,(\lambda_1\lambda_3)\,(\lambda_2\lambda_3)
\end{equation*}
because~$\tau_{1,2,3}\ne 0$.
Hence~$\lambda_1=\lambda_2=\lambda_3=1$ by~\eqref{LambdaProduct}.
For the same reason, we have~$\lambda_i=1$ whenever there exist~$j$, $k$
such that~$\tau_{ijk}\ne 0$.

The assumption~$\lambda_1\lambda_i\le 1$ implies
\begin{equation}\label{LambdaComparison}
  \lambda_i\le 1
\end{equation}
for all indices~$i>1$, hence for all~$i$.
  
Now, consider an index~$i$ such that~$e_i\in\ker\tau$,
so~$\tau_{ijk}=0$ for all~$j$, $k$.
Because~$\rho|_{\ker\tau}$ is positive definite,
Lemma~\ref{ParTorLemma}~\eqref{SectRel} implies
\begin{equation*}
  0<\sum_{j=1}^mR_{ijji}
  =\sum_{j=1}^m\biggl(R_{ijji}+\sum_{k=1}^m\alpha_{ijk}^2\biggr)
  =\sum_{j=1}^mR'_{ijji}\;.
\end{equation*}
By Lemma~\ref{ParTorLemma}~\eqref{SectRel},
all~$R_{ijji}\ge0$, as noted in Remark~\ref{PosRem}.
By the above, there exists~$j$ such that~$R_{ijji}>0$.
But by~\eqref{DquerBLWFormel},
equality in~\eqref{BLWEstimate} implies
\begin{equation*}
  0=(1-\lambda_i^2\lambda_j^2)\,R_{ijji}\;,
\end{equation*}
hence~$\lambda_i\lambda_j=1$.
By~\eqref{LambdaComparison},
we have~$\lambda_i=\lambda_j=1$.
This completes the proof of the rigidity part of Theorem~\ref{MainThm}.

\section{Area Nonincreasing Spin Maps of Nonzero
\texorpdfstring{$\hat A$}{A}-Degree}
\label{MapSection}

We now explain and prove Theorem~\ref{MapThm}.
Recall that a map~$f\colon (N,\bar g)\to(M,g)$ between Riemannian manifolds
is called {\em area-nonincreasing\/} iff~$\bar g\ge f^*g$ on~$\Lambda^2TN$.
Riemannian submersion are a special case.

A {\em spin map\/} is map~$f\colon N\to M$ between differentiable manifolds
such that their second Stiefel-Whitney classes are related~$w_2(N)=f^*w_2(M)$.
This is the case precisely if there exists a Dirac bundle~$W\to N$
that is locally isometric to~$SN\otimes f^*SM$.

Finally,
if~$M$ and~$N$ are oriented with orientation classes~$(M)\in H^m(M;\Z)$
and~$[N]\in H_n(M;\Z)$,
the $\hat A$-degree of a map~$f\colon N\to M$ is defined as
\begin{equation*}
  \deg_{\hat A}f=\bigl(\hat A(N)\wedge(M)\bigr)[N]\;.
\end{equation*}
If~$f$ is a spin map and~$\bar D$ is the Dirac operator
on the bundle~$W\to N$ above,
then
\begin{equation}\label{MapIndexFormel}
  \ind(\bar D)=\deg_{\hat A}f\cdot\chi(M)\;.
\end{equation}
In Theorem~\ref{MapThm},
we have assumed that~$f\colon(N,\bar g)\to(M,g)$ is an area-nonincreasing
spin map of nonzero $\hat A$-genus,
and that~$\chi(M)\ne 0$.

\begin{proof}[Proof of Theorem~\ref{MapThm}]
  We modify the Dirac operator~$\bar D$ on~$W\to N$
  as in Section~\ref{EstSect},
  now using the three-form~$\bar\tau=f^*\tau\in\Omega^3(N)$.
  At a point~$q\in N$,
  we may fix orthonormal bases~$e_1$, \dots, $e_m$ of~$T_{f(q)}M$
  and~$\bar e_1$, \dots, $\bar e_n$ of~$T_qN$ such that there exists
  numbers~$\lambda_1$, \dots, $\lambda_m\ge 0$ with
  \begin{equation}\label{EiquerDef}
    df_q\bar e_i=
    \begin{cases}
      \lambda_ie_i	&\text{if~$1\le i\le m$, and}\\
      0			&\text{if~$i>m$.}
    \end{cases}
  \end{equation}
  Then the coefficients of~$\bar\tau$ are again given by
  \begin{equation*}
    \bar\tau_{ijk}=
    \begin{cases}
      \lambda_i\lambda_j\lambda_k\,\tau_{ijk}
	&\text{if~$1\le i$, $j$, $k\le m$, and}\\
      0	&\text{otherwise.}
    \end{cases}
  \end{equation*}
  
  By~\eqref{MapIndexFormel} and our assumptions~$\deg_{\hat A}f\ne 0$
  and~$\chi(M)\ne0$, the modified operator~$\tilde{\bar D}^W$
  has nonzero index
  \begin{equation*}
    \ind\bigl(\tilde{\bar D}^W\bigr)
    =\bigl(\hat A(TN)\wedge\ch(W/S)\bigr)[N]
    =\bigl(\hat A(TN)\wedge f^*e(TM)\bigr)[N]
    =\deg_{\hat A}(f)\cdot\chi(M)\ne 0\;.
  \end{equation*}
  From this point on,
  the proof of area-extremality in the sense of Gromov
  proceeds as in Section~\ref{ProofSect}.

  To prove strong area-extremality in the sense of Gromov,
  we again consider the modified operator~$\tilde{\bar D}$ on~$W\to N$.
  As in the proof of strong area-extremality above,
  we conclude that~$\bar\kappa\ge\kappa\circ f$
  implies that~$\lambda_1=\dots=\lambda_m=1$,
  hence~$f$ is a Riemannian submersion.
\end{proof}

\bibliographystyle{alpha}

\end{document}